\newcommand{\defn}{\ensuremath{\overset{\mathrm{def}}{=}}}
\newcommand{\dd}{\ensuremath{\mathrm{d}}}
\begin{document}

\renewcommand{\PaperNumber}{033}

\FirstPageHeading

\ShortArticleName{The Fundamental $k$-Form and Global Relations}
\ArticleName{The Fundamental $\boldsymbol{k}$-Form and Global Relations}

\Author{Anthony C.L. ASHTON}

\AuthorNameForHeading{A.C.L. Ashton}

\Address{Department of Applied Mathematics and Theoretical Physics, \\
University of Cambridge, Cambridge, CB3 0WA, UK}

\Email{\href{mailto:a.c.l.ashton@damtp.cam.ac.uk}{a.c.l.ashton@damtp.cam.ac.uk}}

\ArticleDates{Received December 20, 2007, in f\/inal form March
03, 2008; Published online March 20, 2008}

\Abstract{In [{\it Proc. Roy. Soc. London Ser.~A} {\bf 453} (1997), no.~1962, 1411--1443]
A.S.~Fokas introduced a novel method for solving a large class of boundary value problems associated with evolution equations. This approach relies on the construction of a so-called global relation: an integral expression that couples initial and boundary data. The global relation can be found by constructing a dif\/ferential form dependent on some spectral parameter, that is closed on the condition that a given partial dif\/ferential equation is satisf\/ied. Such a~dif\/ferential form is said to be fundamental [{\it Quart. J. Mech. Appl. Math.} {\bf 55} (2002), 457--479].
We give an algorithmic approach in constructing a fundamental $k$-form associated with a given boundary value problem, and address issues of uniqueness. Also, we extend a result of Fokas and Zyskin to give an integral representation to the solution of a class of boundary value problems, in an arbitrary number of dimensions. We present an extended example using these results in which we construct a global relation for the linearised Navier--Stokes equations.}

\Keywords{fundamental $k$-form; global relation; boundary value problems}
\Classification{30E25; 35E99; 35P05}

\section{Introduction}
The traditional analysis associated with boundary value problems (BVPs) are the methods of classical transforms, of images and the use of fundamental solutions. These have been an incredibly successful set of tools and cover a large class of problems. They are not however, infallible -- an example would be the heat equation on the half line with moving boundary \cite{fokas2007gdn}. A~new, unif\/ied approach to studying BVPs was introduced by A.S.~Fokas in \cite{fokas1997utm} which allows the study of problems for which previous methods would fail. This method makes use of the classic integral theorems and elementary complex analysis -- using which it provides:
\begin{itemize}\itemsep=0pt
\item existence results;
\item well posedness conditions;
\item exact solutions in terms of Fourier type integrals.
\end{itemize}
The last of these points deserves elaboration. In the case of evolution equations on the half line
\[ \partial_tq(x,t) + \omega\left(-i\partial_x\right)q(x,t)=0, \qquad (x,t)\in\Omega = \mathbb{R}^+\times (0,T),\]
the new method provides an exact solution which takes the integral form
\[ \int_{\mathbb{R}} e^{ikx-\omega(k)t}\hat{q}_0 (k)\, \dd k - \int_{\partial D^+} e^{ikx-\omega(k)t} \tilde{g}(k)\, \dd k,\]
where $\hat{q}_0(k)$, $\tilde{g}(k)$ are known functions related to the initial and boundary data respectively. The contour $\partial D^+$ is determined by the polynomial $\omega(k) $, and lies in $\Im k\geq 0$. This solution is open to asymptotic analysis by elementary means, since all the $(x,t)$ dependence is in the exponential term. Also, the contours can be adjusted by use of Cauchy's theorem to give uniform convergence\footnote{Certain conditions must be placed on the function class to which $\hat{q}_0(k)$ belongs, but not overly restrictive.}. Numerical implementation of these solutions is then much simpler. For example, the classical solution to the heat equation on the half line is given by a Fourier series, whereas this method provides a representation in terms of uniformly convergent Fourier type integrals which are easily dealt with using standard numerical techniques.

This method of solving boundary value problems is reliant on the analysis of the \emph{global relation}. This is an integral relation, dependent on some spectral parameter, that couples all the data on the boundary. To construct this, one must f\/irst recast the governing PDE into a~divergence form, which is then open to applications of classical integral theorems.

We treat this construction in the setting of the exterior calculus, so by means of Stokes' theorem we may proceed to higher dimensions arbitrarily. Within this setting, we proceed formally: given a boundary value problem on $\Omega$ with associated PDE $\mathcal{L}q=0$, we construct a~dif\/ferential form $\eta = \eta(q,\sigma) \in \Lambda^k (\Omega)$ such that
\[ \dd \eta = 0 \quad \Leftrightarrow \quad\mathcal{L}q=0, \]
where $\sigma\in\mathbb{C}$ is a spectral parameter. Such a dif\/ferential form is said to be \emph{fundamental} \cite{fokas2002fdf}. We see that under the condition $\mathcal{L}q=0$ in $\Omega$, Stokes' theorem gives
\[ 0=\int_{\partial\Omega} \eta (q,\sigma), \]
which is the global relation. So for each global relation, there is a corresponding fundamental $k$-form and we use this as a basis for constructing global relations for systems of linear PDEs. We give a specif\/ic example by constructing the global relation for the unsteady Stokes' equations.

We then use the construction of the fundamental $k$-form to extend a result due to Fokas and Zyskin concerning integral representations of solutions to BVPs in two dimensions. This is achieved by proving their result in the context of the work in this paper, and then extending it to arbitrary dimension.

We conf\/ine our attention to linear partial dif\/ferential operators (PDOs) with constant coef\/f\/i\-cients, but the majority of the analysis can be carried over into non-constant coef\/f\/i\-cients routinely. More generally, one can work with a ring of pseudo-dif\/ferential operators (see \cite{dickey1990sea} for example), but we shall not pursue this line of work here.

\subsection{Organisation of the paper}

In Section~\ref{sec2} we give a formal outline of the construction of a fundamental $k$-form for a given BVP. This involves a series of elementary lemmas, the results of which can be applied algorithmically. We give an example of the construction for some simple dif\/ferential operators, and outline the use of the global relation in solving the classical problem associated with travelling waves on a string. A result from~\cite{fokas2002fdf} is then extended to give an integral representation of the solution (assuming existence) to a class of BVPs on $\Omega \subset \mathbb{R}^n$ in terms of the fundamental $k$-form. We carry over the analysis to deal with systems of PDEs in arbitrary dimension, and conclude in Subsection~\ref{sec2.4} with an extended example in which we construct the global relation for the unsteady Stokes equations in 3+1 dimensions. We aim to keep notation standard throughout, but the reader is referred to the appendix in case of confusion. There, we also include some of the longer calculations that are referred to in the main body of the paper.

\section[The fundamental $k$-form]{The fundamental $\boldsymbol{k}$-form}\label{sec2}

In this section we introduce the concept of the fundamental $k$-form: this is a dif\/ferential form associated with a given linear PDE such that the form is closed if the PDE is satisf\/ied. Use of a solution to the adjoint problem then allows us to enforce that the dif\/ferential form is closed if, \emph{and only if} the PDE is solved. This then gives an equivalent problem, involving integrals along the boundary of the domain of our PDE -- thus naturally incorporating the boundary data for the problem. We study these integral equations, or so-called global relations, and invoke spectral analysis to completely solve BVPs associated with systems of linear PDEs~\cite{fokas1997utm}.

\subsection{Formulation of the problem}

We def\/ine $\eta \in \Lambda^{n-1} \left(\Omega\right)$ for the linear PDO $\mathcal{L} = \sum c_\alpha \partial^\alpha$ (multi-index notation assumed) via
\begin{gather}
\mathrm d \eta = \big( \tilde{q}\mathcal{L} q - q\mathcal{L}^\dagger \tilde{q}\big) \mathrm d x^1 \wedge \cdots \wedge \mathrm d x^n , \label{eta}\end{gather}
where $\Omega \subset \mathbb{R}^n$ is a simply connected domain with piecewise smooth boundary $\partial\Omega$ and the $c_\alpha$ are constants. Later, we will introduce $\{q, \tilde{q}\}$ to be such that $\mathcal{L} q = \mathcal{L}^\dagger \tilde{q} = 0$ and which gives a~suf\/f\/icient condition for $\eta$ to be closed: that is $\mathrm d\eta = 0$. An application of Stokes' theorem then gives
\[ \int_{\partial\Omega}\eta (q,\tilde{q}) = 0. \]
It will be this equation that links the given data on the boundary of our domain, to the solution of our problem by introducing a system of spectral parameters $\{ \sigma_j\}$ such that $\tilde{q}=\exp \left( i\sigma_j x^j\right)$ satisf\/ies $\mathcal{L}^\dagger \tilde{q}=0$. These will allow us to invoke spectral analysis to solve the problem. It is possible to set this work up in full generality, using the theory of jet bundles to describe dif\/ferential operators on manifolds. However, this introduces technicalities that do not aid the presentation of the theory that follows -- so we shall not pursue it here. Essentially, the important property of the domain we have decided to work on is that it is cohomologically trivial: i.e.\ all closed forms are exact.

Since $\eta$ is closed, there $\exists \, \theta \in \Lambda^{n-2}(\Omega)$ such that $\eta = \mathrm d \theta$, which is simply a consequence of the Poincar\'e lemma and the fact $\Omega\subset\mathbb{R}^n$ is contractible. However, it seems easier to construct
\[ \eta = \textstyle \sum_j (-1)^{j+1}a_j (x) \mathrm dx^1 \wedge \cdots \wedge\widehat{ \mathrm dx^j} \wedge \cdots \wedge \mathrm dx^n, \]
where the hat indicates that the particular 1-form be removed. Indeed, this formulation leaves us to f\/ind the $\{ a_j (x)\}$ such that
\begin{gather}
\textstyle \sum_j \partial_j a_j (x) = \tilde{q}\mathcal{L} q - q\mathcal{L}^\dagger\tilde{q}, \label{div}
\end{gather}
where $\partial_j \equiv \partial / \partial x^j$. The $a_j(x)$ necessarily exist, assuming the existence of the adjoint, $\mathcal{L}^\dagger$. This follows from the def\/inition of the adjoint, since
\[ \langle \psi, \mathcal{L}\phi\rangle \defn \langle \mathcal{L}^\dagger \psi,\phi\rangle \]
for any test functions $\phi, \psi \in C^\infty_c (\Omega)$, and $\langle \cdot,\cdot\rangle$ is the inner product associated with $L^2(\Omega)$. Our task now is to construct the closed form $\eta$, given $\mathcal{L}$. Then, using a solution to the adjoint problem, which incorporates spectral parameters, the condition $\dd \eta = 0$ is then equivalent to $\mathcal{L}q=0$.
\begin{definition}\label{def1}
For $\alpha, \beta \in \mathbb{Z}^n_+$, the anti-symmetric object $[\alpha, \beta ]$ is def\/ined  via
\[ [\alpha, \beta ] \defn \partial^\beta \tilde{q}\, \partial^\alpha q - \partial^\beta q\, \partial^\alpha \tilde{q} \]
and the symmetric object $\{\alpha, \beta\}$ is def\/ined by
\[  \{\alpha, \beta \} \defn  \partial^\beta \tilde{q}\, \partial^\alpha q + \partial^\beta q\, \partial^\alpha \tilde{q}, \]
where the functions $q$, $\tilde{q}$ are suitably smooth.
\end{definition}

This notation will be used extensively in what follows: by proving a small collection of lemmas, we have means of decomposing the RHS of~\eqref{div} into a divergence form, hence f\/inding the unknown functions $a_j(x)$. The convenience of this notation is realised after observing the following result.
\begin{lemma}\label{lemma1}
Each PDO of the form $\mathcal{L}=\sum c_\alpha\partial^\alpha$ decomposes as
\[ \mathcal{L} = \mathcal{L}_{\mathrm{e}} \oplus \mathcal{L}_{\mathrm{o}}, \]
where $\mathcal{L}_{\mathrm{e}} = \mathcal{L}_{\mathrm{e}}^\dagger$ and $\mathcal{L}_{\mathrm{o}} = -\mathcal{L}_{\mathrm{o}}^\dagger$, i.e.\ they constitute the self-adjoint and skew-adjoint parts of the operator $\mathcal{L}$.
\end{lemma}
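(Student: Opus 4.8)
The plan is to reduce everything to the behaviour of a single monomial $\partial^\alpha$ under the adjoint. First I would compute, straight from the defining relation $\langle\psi,\mathcal L\phi\rangle = \langle\mathcal L^\dagger\psi,\phi\rangle$ on $C_c^\infty(\Omega)$, that integrating by parts $|\alpha|$ times — the boundary contributions vanishing because the test functions have compact support — yields $(\partial^\alpha)^\dagger = (-1)^{|\alpha|}\partial^\alpha$. It is the bilinear $L^2$ pairing $\langle\psi,\phi\rangle = \int_\Omega \psi\phi$ that is relevant here, namely the same one that makes the divergence identity underlying \eqref{div} collapse under Stokes' theorem, so no conjugation of the constants $c_\alpha$ enters; by linearity one then gets $\mathcal L^\dagger = \sum_\alpha (-1)^{|\alpha|}c_\alpha\partial^\alpha$.

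Next I would split the sum defining $\mathcal L$ according to the parity of the order of each multi-index: put $\mathcal L_{\mathrm e} \defn \sum_{|\alpha|\, \mathrm{even}} c_\alpha\partial^\alpha$ and $\mathcal L_{\mathrm o} \defn \sum_{|\alpha|\, \mathrm{odd}} c_\alpha\partial^\alpha$, so that trivially $\mathcal L = \mathcal L_{\mathrm e} + \mathcal L_{\mathrm o}$. Applying the monomial formula term by term, every summand of $\mathcal L_{\mathrm e}$ is fixed by $\dagger$ and every summand of $\mathcal L_{\mathrm o}$ changes sign, whence $\mathcal L_{\mathrm e}^\dagger = \mathcal L_{\mathrm e}$ and $\mathcal L_{\mathrm o}^\dagger = -\mathcal L_{\mathrm o}$, which is the asserted decomposition.

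Finally, to justify writing $\oplus$ — i.e.\ that the splitting is canonical and that $\mathcal L_{\mathrm e}$, $\mathcal L_{\mathrm o}$ are uniquely determined — I would observe that $\mathcal L\mapsto\mathcal L^\dagger$ is a linear involution on the finite-dimensional space of constant-coefficient PDOs of any fixed order, so that space is the direct sum of its $(+1)$- and $(-1)$-eigenspaces; concretely, if $\mathcal L = A+B$ with $A^\dagger=A$ and $B^\dagger=-B$, then $\mathcal L^\dagger = A-B$ forces $A = \frac12(\mathcal L+\mathcal L^\dagger)$ and $B = \frac12(\mathcal L-\mathcal L^\dagger)$, and one checks these agree with $\mathcal L_{\mathrm e}$, $\mathcal L_{\mathrm o}$ above. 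I expect no genuine obstacle in this lemma; the only point deserving a line of care is being explicit about which pairing ``adjoint'' refers to, so that the integration-by-parts step produces exactly $(-1)^{|\alpha|}\partial^\alpha$ with no boundary terms and no conjugated coefficients.
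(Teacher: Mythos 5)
Your proposal is correct and follows essentially the same route as the paper, whose entire proof is the remark that one simply splits the sum into the parts with $|\alpha|$ even and $|\alpha|$ odd. Your additional care about the bilinear (unconjugated) pairing and the involution/eigenspace remark are consistent with the paper's conventions (cf.\ its later use of $\mathcal{L}^\dagger=\mathcal{L}(-\partial_1,\ldots,-\partial_n)$) but add nothing beyond the paper's one-line argument.
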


This lemma will allow us to deal with the symmetric and anti-symmetric terms that will appear in the sum separately, and simplify the analysis somewhat. The proof of the lemma is trivial: simply split the sum into parts in which $|\alpha|$ is even/odd. We are now left with the task of decomposing the sum into a divergence form, and using our new notation we have
\begin{gather} \big( \tilde{q}\mathcal{L} q - q\mathcal{L}^\dagger \tilde{q}\big)\equiv   \sum_{|\alpha|\, \mathrm{odd}} c_\alpha \{\alpha, 0\} + \sum_{|\alpha|\, \mathrm{even}} c_\alpha [\alpha, 0].
 \label{sum} \end{gather}
We now address the decomposition of these two terms separately.
\begin{lemma} \label{lem1}
For $1 \leq k \leq n$ we have the identity
\begin{gather*} [\alpha, \beta ] = \partial_k \big ( [ \alpha - e_k, \beta] - [\alpha - 2e_k, \beta + e_k ] + \cdots  \\
\phantom{[\alpha, \beta ] =}{}  + (-1)^{(\gamma_k +1)}[\alpha - \gamma_k e_k, \beta + (\gamma_k -1)e_k ]\big ) + (-1)^{\gamma_k}[\alpha - \gamma_ke_k, \beta + \gamma_k e_k ]
\end{gather*}
and the analogous result for $\{\cdot,\cdot\}$ is
\begin{gather*} \{\alpha, \beta \} = \partial_k \big ( \{ \alpha - e_k, \beta\} - \{\alpha - 2e_k, \beta + e_k \} + \cdots  \\
\phantom{\{\alpha, \beta \} =}{}  + (-1)^{(\gamma_k +1)}\{\alpha - \gamma_k e_k, \beta + (\gamma_k -1)e_k \}\big ) + (-1)^{\gamma_k}\{\alpha - \gamma_ke_k, \beta + \gamma_k e_k \}
\end{gather*}
for any $\gamma_k \leq \alpha_k$, where $\alpha = (\alpha_1, \ldots , \alpha_k, \ldots , \alpha_n )$ and the $\{e_j\}$ are the standard basis vectors on~$\mathbb{R}^n$.
\end{lemma}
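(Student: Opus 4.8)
The plan is to reduce the whole identity to a single Leibniz-type relation and then telescope. First I would record the elementary fact, immediate from Definition~\ref{def1}, that for any multi-indices $\mu,\nu$ with $\mu_k\geq 1$ one has
\[ \partial_k[\mu-e_k,\nu] = [\mu,\nu] + [\mu-e_k,\nu+e_k]. \]
Indeed, writing out $[\mu-e_k,\nu]=\partial^\nu\tilde{q}\,\partial^{\mu-e_k}q-\partial^\nu q\,\partial^{\mu-e_k}\tilde{q}$ and applying $\partial_k$ by the ordinary product rule, in each of the two products the extra derivative lands either on the $\mu$-factor, contributing $[\mu,\nu]$, or on the $\nu$-factor, contributing $[\mu-e_k,\nu+e_k]$. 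Exactly the same computation gives $\partial_k\{\mu-e_k,\nu\}=\{\mu,\nu\}+\{\mu-e_k,\nu+e_k\}$, since the Leibniz rule is insensitive to the relative sign of the two terms; so the bracket and brace cases are proved in parallel.

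Next I would apply this relation with $\mu=\alpha-(m-1)e_k$ and $\nu=\beta+(m-1)e_k$ for $m=1,\dots,\gamma_k$. The hypothesis $\gamma_k\leq\alpha_k$ guarantees $\mu_k=\alpha_k-(m-1)\geq 1$ throughout, so all multi-indices occurring have non-negative entries and every term is well defined. Abbreviating $b_m\defn[\alpha-me_k,\beta+me_k]$, the relation reads $\partial_k[\alpha-me_k,\beta+(m-1)e_k]=b_{m-1}+b_m$. Multiplying by $(-1)^{m+1}$ and summing over $m$ gives
\[ \partial_k\Bigg(\sum_{m=1}^{\gamma_k}(-1)^{m+1}[\alpha-me_k,\beta+(m-1)e_k]\Bigg)=\sum_{m=1}^{\gamma_k}(-1)^{m+1}\big(b_{m-1}+b_m\big), \]
and on the right-hand side all interior terms cancel in pairs, leaving $b_0+(-1)^{\gamma_k+1}b_{\gamma_k}$. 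Since $b_0=[\alpha,\beta]$, rearranging yields precisely the stated formula for $[\alpha,\beta]$, and the identical computation with $\{\cdot,\cdot\}$ in place of $[\cdot,\cdot]$ gives the companion identity.

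There is no genuine obstacle here; the only points requiring care are the sign bookkeeping in the telescoping sum and the observation that $\gamma_k\leq\alpha_k$ is exactly what keeps the lower-order multi-indices admissible. If one prefers to avoid displaying the general alternating sum, the result also follows by induction on $\gamma_k$: the case $\gamma_k=1$ is the Leibniz relation above, and the inductive step applies that relation once more to the residual term $(-1)^{\gamma_k}[\alpha-\gamma_ke_k,\beta+\gamma_ke_k]$, absorbing the new exact piece into the derivative and leaving the residual term at level $\gamma_k+1$.
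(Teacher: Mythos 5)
Your proof is correct and follows essentially the same route as the paper: your one-step Leibniz relation $\partial_k[\mu-e_k,\nu]=[\mu,\nu]+[\mu-e_k,\nu+e_k]$ is exactly the paper's identity $[\alpha,\beta]=\partial_k[\alpha-e_k,\beta]-[\alpha-e_k,\beta+e_k]$ rearranged, and the paper then simply asserts ``the result follows, inductively.'' The only difference is that you carry out the iteration explicitly via the alternating telescoping sum (and verify the sign bookkeeping and the admissibility condition $\gamma_k\leq\alpha_k$), which the paper leaves to the reader.
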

\begin{proof}
We proceed by an elementary observation
\begin{gather*}
\partial^\beta \tilde{q} \, \partial^\alpha q = D^{\beta_1}_1 \cdots D^{\beta_k}_k\cdots  D^{\beta_n}_n \tilde{q} \, D^{\alpha_1}_1 \cdots D^{\alpha_k}_k\cdots D^{\alpha_n}_n q \\
\phantom{\partial^\beta \tilde{q} \, \partial^\alpha q}{} = D^1_k \big( D^{\beta_1}_1 \cdots D^{\beta_k}_k\cdots  D^{\beta_n}_n \tilde{q} \, D^{\alpha_1}_1 \cdots D^{\alpha_k - 1}_k\cdots D^{\alpha_n}_n q \big) \\
\phantom{\partial^\beta \tilde{q} \, \partial^\alpha q=}{}  - D^{\beta_1}_1 \cdots D^{\beta_k+ 1}_k\cdots  D^{\beta_n}_n \tilde{q} \, D^{\alpha_1}_1 \cdots D^{\alpha_k-1}_k\cdots D^{\alpha_n}_n q,
\end{gather*}
where $D^a_b \doteqdot \left( \partial /\partial x^b\right)^a$. Clearly then, we have
\[ [\alpha, \beta] = \partial_k [\alpha - e_k, \beta ] - [\alpha - e_k, \beta + e_k ]. \]
The result follows, inductively. The proof for $\{\cdot,\cdot\}$ is similar.
\end{proof}
\begin{lemma}\label{lem2}
For $\alpha \in\mathbb{Z}^n_+$ and $1\leq k\leq n$ we have
\[ \{\alpha, \alpha +e_k \} = \partial_k \{\alpha,\alpha\}, \]
where the $e_k$ are standard basis vectors on $\mathbb{R}^n$.
\end{lemma}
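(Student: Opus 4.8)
The plan is to establish the identity by direct expansion, in exactly the style of the proof of Lemma~\ref{lem1}: the whole statement reduces to a single application of the Leibniz rule, and the only thing that needs care is the multi-index bookkeeping together with the observation that the two terms making up the symmetric bracket coincide when its arguments agree.

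First I would unfold the left-hand side from Definition~\ref{def1} with $\beta = \alpha + e_k$, giving
\[ \{\alpha, \alpha + e_k\} = \partial^{\alpha+e_k}\tilde{q}\,\partial^{\alpha} q + \partial^{\alpha+e_k} q\,\partial^{\alpha}\tilde{q}. \]
Since $q$ and $\tilde q$ are smooth the mixed partials commute, so $\partial^{\alpha + e_k} = \partial_k\partial^{\alpha}$, and the right-hand side becomes $(\partial_k\partial^{\alpha}\tilde q)\,\partial^{\alpha} q + \partial^{\alpha}\tilde q\,(\partial_k\partial^{\alpha} q)$. Grouping these two terms is precisely the expansion of a derivative of a product,
\[ (\partial_k\partial^{\alpha}\tilde q)\,\partial^{\alpha} q + \partial^{\alpha}\tilde q\,(\partial_k\partial^{\alpha} q) = \partial_k\big(\partial^{\alpha}\tilde q\,\partial^{\alpha} q\big), \]
after which it remains only to recognise the product $\partial^{\alpha}\tilde q\,\partial^{\alpha} q$ as $\{\alpha,\alpha\}$, the two summands in Definition~\ref{def1} defining $\{\alpha,\alpha\}$ being identical. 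This yields $\{\alpha,\alpha+e_k\} = \partial_k\{\alpha,\alpha\}$, as claimed.

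Alternatively, and more in keeping with the inductive spirit of the preceding lemmas, I would deduce the result from the $\gamma_k = 1$ instance of the symmetric half of Lemma~\ref{lem1}: with $\alpha$ replaced by $\alpha + e_k$ and $\beta = \alpha$ one obtains $\{\alpha+e_k,\alpha\} = \partial_k\{\alpha,\alpha\} - \{\alpha,\alpha+e_k\}$, and substituting the symmetry $\{\mu,\nu\} = \{\nu,\mu\}$ of the bracket gives the identity directly. I expect no genuine obstacle here: the lemma is essentially notational, its sole role being to dispose of the ``diagonal'' terms $\{\alpha,\alpha+e_k\}$ that are not handled by Lemma~\ref{lem1}; the only point that warrants attention is the coincidence of the two terms in $\{\alpha,\alpha\}$, so that the identification introduces no stray numerical factor.
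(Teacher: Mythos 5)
The paper gives no proof of this lemma --- it is one of the results dismissed with ``the proofs to these all follow from the definitions'' --- so there is nothing to compare against except Definition~\ref{def1} itself. Your strategy (expand the bracket and apply the Leibniz rule) is the right and essentially only one, and your alternative derivation from the $\gamma_k=1$ case of Lemma~\ref{lem1} together with the symmetry of $\{\cdot,\cdot\}$ is a sensible cross-check. However, the final identification in both of your arguments carries a factor-of-$2$ error, and it occurs precisely at the point you single out as needing attention. With Definition~\ref{def1} as written, the two summands of $\{\alpha,\alpha\}$ coincide, so $\{\alpha,\alpha\} = 2\,\partial^\alpha\tilde q\,\partial^\alpha q$, \emph{not} $\partial^\alpha\tilde q\,\partial^\alpha q$. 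Your Leibniz computation correctly yields $\{\alpha,\alpha+e_k\} = \partial_k\bigl(\partial^\alpha\tilde q\,\partial^\alpha q\bigr)$, which is therefore $\tfrac{1}{2}\partial_k\{\alpha,\alpha\}$ rather than $\partial_k\{\alpha,\alpha\}$. Your second route exposes the same discrepancy: from $\{\alpha+e_k,\alpha\} = \partial_k\{\alpha,\alpha\} - \{\alpha,\alpha+e_k\}$ and the symmetry of the bracket one gets $2\{\alpha,\alpha+e_k\} = \partial_k\{\alpha,\alpha\}$, again the half. So the ``coincidence of the two terms'' introduces exactly the stray numerical factor you assert it does not.

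To be fair, the slip appears to originate in the statement of the lemma itself, which as literally written against Definition~\ref{def1} is off by the same factor of $2$; and the constant is immaterial for the only use made of the result, namely showing that the diagonal term $\{\beta,\beta+e_k\}$ arising at the end of the odd-$|\alpha|$ case is a total $k$-derivative and hence contributes only to the divergence part. But a careful proof should either carry the $\tfrac{1}{2}$ honestly (and flag the mismatch with the statement) or note explicitly that the identity is claimed only up to an irrelevant constant. Asserting that no numerical factor appears is the one incorrect step in an otherwise correct argument.
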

\begin{lemma}[Exchange lemma] \label{exchange}
For $\alpha \in \mathbb{Z}^n_+$ and $e_k$ the standard basis vectors in $\mathbb{R}^n$ we have
\begin{gather*}   \partial^{\alpha + e_1 + \cdots + e_m}q\, \partial^{\alpha + e_{m+1} + \cdots + e_{2m}}\tilde{q} = \partial^{\alpha + e_1 +\cdots+ \widehat{e_k}+\cdots + e_m+e_{m+j}}q \, \partial^{\alpha+e_{m+1} +\cdots+\widehat{e_{m+j}}+\cdots + e_{2m}+e_k}\tilde{q} \\
\phantom{\partial^{\alpha + e_1 + \cdots + e_m}q\, \partial^{\alpha + e_{m+1} + \cdots + e_{2m}}\tilde{q}}{} +\partial_k \big [ \partial^{\alpha + e_1 + \cdots + \widehat{e_k} + \cdots +e_{m}}q\, \partial^{\alpha + e_{m+1} +\cdots + e_{m+j} + \cdots +e_{2m}}\tilde{q}\big ] \\
\phantom{\partial^{\alpha + e_1 + \cdots + e_m}q\, \partial^{\alpha + e_{m+1} + \cdots + e_{2m}}\tilde{q}}{} - \partial_{m+j} \big [ \partial^{\alpha + e_1 + \cdots + \widehat{e_k} + \cdots +e_{m}}q\, \partial^{\alpha + e_{m+1} +\cdots +\widehat{e_{m+j}}+\cdots + e_{2m} + e_k} \tilde{q}\big ],
\end{gather*}
where $1\leq j,k \leq m$.
\end{lemma}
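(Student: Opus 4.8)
The plan is to prove the identity by a single application of the Leibniz rule, exactly in the spirit of the ``elementary observation'' that opens the proof of Lemma~\ref{lem1}; the statement carries no analytic content beyond the smoothness of $q$ and $\tilde q$, so the work is entirely multi-index bookkeeping. First I would introduce the shorthand $A \defn \alpha + e_1 + \cdots + e_m$ and $B \defn \alpha + e_{m+1} + \cdots + e_{2m}$, so that the left-hand side is $\partial^A q\,\partial^B\tilde q$ and every index appearing on the right is $A$ or $B$ with a single basis vector added or removed. Since $1\le k\le m<m+j\le 2m$ we have $k\ne m+j$; and as $A_k=\alpha_k+1\ge1$, $B_{m+j}=\alpha_{m+j}+1\ge1$, the multi-indices $A-e_k$, $B-e_{m+j}$, $A-e_k+e_{m+j}$ and $B-e_{m+j}+e_k$ all lie in $\mathbb{Z}^n_+$. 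This is exactly the role of the hypothesis $1\le j,k\le m$ (and, implicitly, $2m\le n$): the derivative stripped from $q$ and the one stripped from $\tilde q$ act in genuinely different coordinate directions, so the ``exchange'' is well defined.

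Next I would expand the two divergence terms on the right-hand side by the product rule. One has
\[ \partial_k\big(\partial^{A-e_k}q\,\partial^{B}\tilde q\big)=\partial^{A}q\,\partial^{B}\tilde q+\partial^{A-e_k}q\,\partial^{B+e_k}\tilde q \]
(using that the $k$-component of $B$ is $\alpha_k$, so $\partial_k\partial^{B}=\partial^{B+e_k}$), and similarly, since $\partial_{m+j}$ commutes with $\partial^{A-e_k}$ and the $(m+j)$-component of $A-e_k$ is $\alpha_{m+j}$,
\[ \partial_{m+j}\big(\partial^{A-e_k}q\,\partial^{B-e_{m+j}+e_k}\tilde q\big)=\partial^{A-e_k+e_{m+j}}q\,\partial^{B-e_{m+j}+e_k}\tilde q+\partial^{A-e_k}q\,\partial^{B+e_k}\tilde q. \]
Substituting these into the right-hand side of the claimed identity, the leading explicit term $\partial^{A-e_k+e_{m+j}}q\,\partial^{B-e_{m+j}+e_k}\tilde q$ is cancelled by the first term of the second expansion, while $\partial^{A-e_k}q\,\partial^{B+e_k}\tilde q$ cancels between the two expansions; the only surviving term is $\partial^{A}q\,\partial^{B}\tilde q$, which is the left-hand side.

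I do not expect a genuine obstacle. The only care needed is notational: one must confirm that the hat is placed on $e_k$ in the $q$-index and on $e_{m+j}$ in the $\tilde q$-index (and not conversely), and that the mixed partials may be reordered freely, which requires $q\in C^{|A|+1}(\Omega)$ and $\tilde q\in C^{|B|+1}(\Omega)$ --- covered by the standing smoothness assumption on $q$, $\tilde q$. It is worth noting for later use that iterating this single step transports any number of derivatives between $q$ and $\tilde q$ modulo an exact form, which is precisely what is needed to put~\eqref{sum} into divergence form.
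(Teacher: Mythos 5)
Your proof is correct: expanding the two divergence terms by the product rule and cancelling the pairs $\partial^{A-e_k+e_{m+j}}q\,\partial^{B-e_{m+j}+e_k}\tilde q$ and $\partial^{A-e_k}q\,\partial^{B+e_k}\tilde q$ leaves exactly the left-hand side, and your checks that the stripped multi-indices remain in $\mathbb{Z}^n_+$ are the only hypotheses needed. The paper omits the proof entirely (stating it ``follows from the definitions''), and your Leibniz-rule verification is precisely the intended argument, in the same spirit as the elementary observation opening the proof of Lemma~\ref{lem1}.
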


\begin{remark}In particular, this exchanges the partial derivatives between $q$ and $\tilde{q}$, modulo some total dif\/ferential, so repeated application gives
\[ [\alpha + e_1 +\cdots + e_m, \alpha + e_{m+1} + \cdots + e_{2m}] = \partial (\textrm{terms involving $q,\tilde{q},\partial q,\partial\tilde{q}$}) \]
since the f\/inal term, after exchanging each of the derivatives on the f\/irst term in $[\alpha + \cdots, \alpha + \cdots]$, cancels with the second term in $[\alpha + \cdots, \alpha + \cdots]$.
\end{remark}
The proof to these all follow from the def\/initions, so for the sake of brevity they are omitted. We now use these lemmas to give an algorithmic method of reducing the sum in (\ref{sum}) into a~divergence form. In what follows $\alpha_k$ will denote the $k$-th component of the vector $\alpha \in \mathbb{Z}^n_+$, and $|\alpha|=\alpha_1 + \cdots + \alpha_n$. It suf\/f\/ices to prove the result for a generic term involving $[\alpha,0]$ or $\{\alpha,0\}$, since the general result follows by linearity.

We observe that if each component of $\alpha$ is even, then $|\alpha|$ is even and we can use Lemma~\ref{lem1} on $[\alpha,0]$ exactly $\tfrac{1}{2}\alpha_k$ times for each $\alpha_k$, so that the f\/inal term will be $[\tfrac{1}{2}\alpha,\tfrac{1}{2}\alpha]$ which is zero by the anti-symmetry of $[\cdot,\cdot]$. Alternatively, we may have $|\alpha|$ even if all but some \emph{even} subset of the $\alpha_k$, $1\leq k\leq n$ are odd. Let this subset be of size $2m$, $1\leq m\leq \tfrac{1}{2}n$. We can make repeated use of lemma \ref{lem1} to decompose the object into something of the form
\[ [\gamma + e_1 + \dots + e_m, \gamma+ e_{m+1}+\dots+e_{2m}] + \partial (\textrm{other terms}), \]
where we have labelled $\alpha = 2\gamma + \sum\limits_{k=1}^{2m}e_k$ without loss of generality. This f\/inal term can be decomposed using the exchange Lemma~\ref{exchange}. This concludes the proof of the existence of the decomposition in the case for $|\alpha|$ is even. There is only one case to consider for $|\alpha|$ odd: that is, when there is an odd sized subset of the $\alpha_k$ that are themselves odd. Once again, using Lemma \ref{lem1} repeatedly, we may decompose our object into
\[ \{\gamma + e_1 + \cdots + e_m, \gamma + e_{m+1}+\cdots+e_{2m+1}\} + \partial (\textrm{other terms}) \]
and then apply the exchange lemma on the f\/irst term, $m$ times. Then, modulo total derivatives, we are left with the remainder from the exchange lemma, and the second term from $\{\alpha + \cdots, \alpha+\cdots\}$. These two terms can be written as $\{\beta, \beta + e_k\}$ for a suitable $\beta$ having chosen $e_k$, and by Lemma~\ref{lem2} we are done. This construction asserts the following theorem:
\begin{theorem}\label{theorem1}
Let $\mathcal{L}$ be a linear differential operator with constant coefficients. Given data on the (piece-wise smooth) boundary of a domain $\Omega\subset\mathbb{R}^n$, $\exists\, \eta \in \Lambda^{n-1}(\Omega)$ such that $\eta$ is closed iff $\tilde{q}\mathcal{L}q - q\mathcal{L}^\dagger \tilde{q} = 0$. Moreover, $\eta$ is local and bilinear in $q$, $\tilde{q}$ and derivatives thereof.
\end{theorem}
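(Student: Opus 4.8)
The plan is to exhibit $\eta$ explicitly and read the equivalence off its exterior derivative, so that no separate ``only if'' argument is needed. I would take $\eta = \sum_j (-1)^{j+1} a_j(x)\, \mathrm dx^1 \wedge \cdots \wedge \widehat{\mathrm dx^j} \wedge \cdots \wedge \mathrm dx^n$, for which $\mathrm d\eta = \big(\sum_j \partial_j a_j\big)\, \mathrm dx^1\wedge\cdots\wedge \mathrm dx^n$. Since a top-degree form on $\Omega\subset\mathbb{R}^n$ vanishes exactly when its single coefficient does, the claim ``$\eta$ closed iff $\tilde q\mathcal{L}q - q\mathcal{L}^\dagger\tilde q = 0$'' is equivalent to producing functions $a_j$ with $\sum_j \partial_j a_j = \tilde q\mathcal{L}q - q\mathcal{L}^\dagger\tilde q$; that is, to writing the right-hand side of~\eqref{div} in divergence form. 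Both implications then hold simultaneously, and everything reduces to this decomposition.

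For the decomposition I would first invoke Lemma~\ref{lemma1} to split $\mathcal{L}$ into its self- and skew-adjoint parts, recasting $\tilde q\mathcal{L}q - q\mathcal{L}^\dagger\tilde q$ as the finite sum~\eqref{sum} of terms $\{\alpha,0\}$ with $|\alpha|$ odd and $[\alpha,0]$ with $|\alpha|$ even; by linearity it suffices to treat one generic term. For $[\alpha,0]$ with $|\alpha|$ even there are two subcases. If every $\alpha_k$ is even, applying Lemma~\ref{lem1} in the $k$-th slot $\tfrac{1}{2}\alpha_k$ times, for each $k$, produces $\partial(\cdots) + [\tfrac{1}{2}\alpha,\tfrac{1}{2}\alpha]$, and the last bracket vanishes by antisymmetry. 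If instead an even subset of the $\alpha_k$ of size $2m$ is odd, write $\alpha = 2\gamma + \sum_{k=1}^{2m} e_k$; repeated use of Lemma~\ref{lem1} reduces the term to $\partial(\cdots) + [\gamma + e_1 + \cdots + e_m,\ \gamma + e_{m+1} + \cdots + e_{2m}]$, and applying the Exchange Lemma~\ref{exchange} once to each of the $m$ derivatives sitting on the $q$-factor converts the remaining bracket into a total derivative, the non-divergence remainder left after the $m$-th exchange cancelling the original second term of the bracket. The odd case is identical in spirit: $\{\alpha,0\}$ with $|\alpha|$ odd reduces via Lemma~\ref{lem1} to $\partial(\cdots) + \{\gamma + e_1 + \cdots + e_m,\ \gamma + e_{m+1} + \cdots + e_{2m+1}\}$, $m$ applications of the Exchange Lemma leave a term of the shape $\{\beta,\beta + e_k\}$ for suitable $\beta$ and $k$, and Lemma~\ref{lem2} rewrites this as $\partial_k\{\beta,\beta\}$.

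It remains to record the ``moreover'' clause. Each elementary move above --- Lemma~\ref{lem1}, Lemma~\ref{lem2}, the Exchange Lemma --- outputs only finite sums of products $\partial^\mu q\, \partial^\nu\tilde q$ with $|\mu|,|\nu|$ bounded by the order of $\mathcal{L}$, and the reduction terminates after finitely many steps because each step strictly decreases $|\alpha|$ (equivalently, the total derivative count in the offending term). Collecting the total-derivative contributions into the $a_j$, we obtain $a_j$ that are finite sums of such products: bilinear in $(q,\tilde q)$ and their derivatives, and local because only a finite-order jet is involved. This completes the construction, and with it the theorem.

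The main obstacle is purely combinatorial bookkeeping in the $2m$-derivative case: through $m$ successive applications of the Exchange Lemma one must keep track of precisely which index is transferred from the $q$-slot to the $\tilde q$-slot, so as to verify that the accumulated non-divergence remainder coincides, after relabelling, with the second term of $[\gamma + \cdots,\gamma + \cdots]$ (respectively, that it collapses to the form $\{\beta,\beta + e_k\}$). Granted the stated lemmas, no analytic input is required beyond the vanishing of a top-degree form with vanishing coefficient.
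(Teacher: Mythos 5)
Your proposal is correct and follows essentially the same route as the paper: reduce the claim to writing $\tilde q\mathcal{L}q - q\mathcal{L}^\dagger\tilde q$ in divergence form via the splitting~\eqref{sum}, then handle the even case by antisymmetry of $[\cdot,\cdot]$ or by the Exchange Lemma~\ref{exchange}, and the odd case by the Exchange Lemma followed by Lemma~\ref{lem2}. The only addition is that you make explicit the (trivial but worth stating) observation that a top-degree form vanishes iff its coefficient does, which is how the ``iff'' in the statement is obtained; the paper leaves this implicit.
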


This essentially follows from the construction we outlined previously. Up to now, we have dealt in full generality -- in practice, performing the decomposition is a simple task of using Lemmas~\ref{lem1},~\ref{lem2},~\ref{exchange} repeatedly, there is very little computation involved. We proceed by way of an example to highlight the methods described thus far.

\begin{example}\label{ex1}
We consider the wave equation in 1+1 dimensions with $\mathcal{L}=\partial_t^2 - \partial_x^2$, on a f\/inite interval -- say $(x,t)\in \Omega = (0,l)\times (0,T)$ with initial and boundary conditions
\[ q(0,t)=q(l,t) =0, \qquad q(x,0)=u(x), \qquad q_t (x,0)=v(x). \]
Then employing the methods outlined previously, we f\/ind
\begin{gather*}
0 =  \tilde{q}\mathcal{L}q - q\mathcal{L}^\dagger \tilde{q}   = \tilde{q}q_{tt} - q\tilde{q}_{tt} -  \left( \tilde{q}q_{xx} - q\tilde{q}_{xx}\right) \\
\phantom{0}= \partial_t \left( \tilde{q}q_t - q\tilde{q}_t\right) - \partial_x \left(\tilde{q}q_x - q\tilde{q}_x\right),
\end{gather*}
from which we can introduce the closed 1-form $\eta \in \Lambda^1(\Omega)$, def\/ined by
\[ \eta = \left( \tilde{q}q_t - q\tilde{q}_t\right)\, \mathrm dx + \left(\tilde{q}q_x - q\tilde{q}_x\right)\, \mathrm dt . \]
Using two solutions for $\tilde{q} = \exp\{ ik(x\pm t)\}$, we f\/ind \emph{two} global relations by integrating about $\partial \Omega$ for the two dif\/ferent\footnote{See the Appendix for an outline of this computation and def\/initions of the notation used.} $\tilde{q}$
\begin{gather}
e^{-ikt}\left( \hat{q}_t (k,t) + ik\hat{q}(k,t)\right)  = \hat{g}(k) + ik\hat{f}(k) + e^{ikl}h_1(k,t) -  h_2 (k,t) , \label{global1}\\
e^{ikt}\left( \hat{q}_t (k,t) - ik\hat{q}(k,t)\right)  = \hat{g}(k) - ik\hat{f}(k) + e^{ikl}h_1(-k,t) -  h_2 (-k,t) , \label{global2}
\end{gather}
where we have invoked the boundary conditions at $x=0$ and $x=l$. These global relations are valid for all $k\in\mathbb{C}$, so in (\ref{global2}) we are free to take $k\mapsto -k$, and upon subtracting (\ref{global1}) we f\/ind
\[ e^{-ikt} \left( \hat{q}^s_t (k,t) + ik\hat{q}^s(k,t)\right) = \hat{g}^s(k) + ik\hat{f}^s(k) + \sin (kl)\, h_1 (k,t). \]
If we evaluate this expression for $kl = n\pi$, $n \in \mathbb{Z}$ then we eliminate the unknown data and f\/ind
\[ \partial_t \big\{ e^{ikt} \hat{q}^s(k,t) \big\} = e^{2ikt} \big\{ \hat{g}^s(k) + ik\hat{f}^s(k) \big\} , \qquad k = \frac{n\pi}{l}. \]
Integrating up, we f\/ind the classical Fourier series solution to the problem.
\end{example}
The previous example shows how this method works as expected for basic boundary value problems, but does not make much of the use of the main result so far; the next example will illustrate its use.
\begin{example}\label{ex2}
By means of a more involved example, we consider the dif\/ferential operator $\mathcal{L}$ def\/ined by
\[ \mathcal{L} = \partial_{xx}\partial_{yy}\partial_{zz} +  \partial_{xx}\partial_{yy}+ \partial_{zz}. \]
Equivalently, introducing the vectors $v_j \in \mathbb{Z}^3_+$ via
\[ v_1 = (2,2,2), \qquad v_2 = (2,2,0), \qquad v_3 = (0,0,2), \]
we may employ the same notation introduced earlier, so that
\[ \tilde{q}\mathcal{L}q - q\mathcal{L}^\dagger \tilde{q} = [v_1, 0] + [v_2, 0] + [v_3, 0]. \]
We now explicitly compute the f\/irst of these terms, with repeated use of the result in Lemma~\ref{lem1}. We have
\begin{gather*}
[v_1, 0]  = \partial_x \left [ (1,2,2),0 \right ] - \left [ (1,2,2), (1,0,0) \right ] \\
\phantom{[v_1, 0]}{} = \partial_x \left [ (1,2,2),0 \right ] - \partial_y \left [ (1,1,2), (1,0,0) \right ] + \left [ (1,1,2), (1,1,0)\right ] \\
\phantom{[v_1, 0]}{}= \partial_x \left [ (1,2,2),0 \right ] - \partial_y \left [ (1,1,2), (1,0,0) \right ] + \partial_z \left [ (1,1,1), (1,1,0) \right ],
\end{gather*}
where the f\/inal term $\left [ (1,1,1), (1,1,1)\right ]$ has been discarded, using the anti-symmetry of $[ \cdot, \cdot ]$. Making similar calculations for the last two terms, we f\/ind
\begin{gather*}
0= \partial_x \left\{ \tilde{q}q_{xyyzz} - q\tilde{q}_{xyyzz} + \tilde{q} q_{xyy} - q\tilde{q}_{xyy} \right\}
  - \partial_y \left\{ \tilde{q}_x q_{xyzz} - q_x \tilde{q}_{xyzz} + \tilde{q}_x q_{xy} - q_x \tilde{q}_{xy} \right\} \\
\phantom{0=}{} + \partial_z \left\{ \tilde{q}_{xy}q_{xyz} - q_{xy}\tilde{q}_{xyz} + \tilde{q}q_{z} - q\tilde{q}_{z} \right\}.
\end{gather*}
Now introducing a solution $\tilde{q}=\exp \left( i \sigma_j x^j\right)$ to $\mathcal{L}^\dagger \tilde{q}=0$, we f\/ind the constraint
\[ \sigma_1^2\sigma_2^2 = \frac{ \sigma_0^2}{1-\sigma_0^2}. \]
This constraint has a rational parameterisation: introduce $\lambda \in \mathbb{C}$, to f\/ind
\[ \sigma_1 \sigma_2 = \pm 2 \left( \lambda - \tfrac{1}{\lambda}\right)^{-1}, \qquad \sigma_0 = \pm 2\left( \lambda + \tfrac{1}{\lambda}\right)^{-1}. \]
This can then be used in our expression for $\eta$, from which we can again invoke Stokes' theorem to get an integral equation involving the spectral parameter $\lambda \in \mathbb{C}$, with some additional $\nu\in\mathbb{C}$ and using similar methods to the previous example (although this case will be considerably more laborious), we can solve the problem given suf\/f\/icient data on the boundary of our domain.
\end{example}

\begin{remark}
Note that the construction of $\eta$ in this proof does not provide a unique divergence form for the dif\/ferential operator -- the construction is dependent on the order in which we apply the result of Lemma~\ref{lem1} to each component of $\alpha$, as well as how we apply Lemma~\ref{exchange}.
\end{remark}

Clearly, the lack of uniqueness means there is a certain amount of ambiguity in how one should construct $\eta$ for a given PDE. In general, it should be chosen so that the resulting integral equation contains terms which are specif\/ied as much as possible by the given data on the boundary. We illustrate this fact by counting the number of dif\/ferent ways to decompose a~single term in~(\ref{sum}). Without loss of generality we set $\alpha = 2\gamma + \sum\limits_{i=1}^m e_i$ for some $0\leq m \leq n$.
\begin{definition}
Given $\alpha \in \mathbb{Z}^n_+$, we def\/ine a path, $\mathcal{P}(\alpha)$, to be an ordered collection of basis vectors such that $[\alpha,0]$ (alternatively $\{\alpha,0\}$) is decomposed into $[\gamma + \sum_i e_i, \gamma]$ (resp. $\{\gamma+\sum_i e_i,\gamma\}$) by applying Lemma~\ref{lem1} to the terms in $2\gamma$ in the order $\mathcal{P}(\alpha)$.
\end{definition}
\begin{example}
Let $\alpha = (2,2,4) \equiv 2(1,1,2)$, so that $[\alpha,0]$ can be decomposed by applying Lemma~\ref{lem1} to the terms $\{e_1, e_2, e_3, e_3\}$ in some order. For instance, applying it along the path $\mathcal{P}_1(\alpha)=\{ e_1, e_2, e_3,e_3\}$ yields
\begin{gather*}
[\alpha,0] = \partial_1 [e_1 + 2e_2 + 4e_3,0] - \partial_2 [ e_1 + e_2 + 4e_3, e_1] + \partial_3 [ e_1 + e_2 + 3e_3, e_1+e_2] \\
\phantom{[\alpha,0] =}{} - \partial_3[e_1 + e_2 + 2e_3, e_1 + e_2 + e_3].
\end{gather*}
Alternatively, if we apply the lemma along the path $\mathcal{P}_2(\alpha) = \{ e_3, e_3, e_1, e_2\}$ we f\/ind
\begin{gather*}
 [\alpha,0] = \partial_3 [2e_1 + 2e_2 + 3e_3,0] - \partial_3[2e_1 + 2e_2 + 2e_3, e_3] + \partial_1[e_1 + 2e_2 + 2e_3, 2e_3] \\
 \phantom{[\alpha,0] =}{} - \partial_2[e_1 + e_2 + 2e_3, 2e_3 + e_1].
\end{gather*}
\end{example}

This example illustrates how the number of dif\/ferent paths for a term in the PDO will ef\/fect the number of dif\/ferent fundamental $k$-forms one may construct by means of our algorithm. Indeed, we see that Lemma~\ref{lem1} can be applied in a variety of ways to a single term, the total number of which would be equal to the total number of distinct permutations of a generic path. For instance, in the previous example we could have constructed a total of $4!/2!=12$ distinct paths, which is exactly the number of proper permutations of the set $\{ e_1, e_2, e_3, e_3\}$.
\begin{definition}
Given $\alpha \in \mathbb{Z}^n_+$ and a path $\mathcal{P}(\alpha)$ we denote the number of distinct permutations of the elements of the path $\mathcal{P}(\alpha)$ by $\sigma(\alpha)$.
\end{definition}

\begin{example}
Given $\alpha = (2,2,5,6)\equiv 2(1,1,2,3)+(0,0,1,0)$, a path must consist of the elements $\{ e_1, e_2, e_3, e_3, e_4, e_4, e_4\}$. It follows that $\sigma(\alpha) = 7!/(2!\times 3!) = 420$.
\end{example}

\begin{remark}
Note that the length of a path will be equal to $\tfrac{1}{2} \left( |\alpha| - \# \text{ of odd components in } \alpha\right)$.
\end{remark}

Of course, there is more to be done if some of the $\alpha_k$, $1\leq k\leq n$ happen to be odd: we need to apply the exchange lemma a def\/inite number of times. We know that we can apply Lemma~\ref{lem1} successively to reduce a generic term into either
\begin{gather} [\gamma + e_1 + \cdots + e_{2m}, \gamma ]\label{even} \end{gather}
in the case of an even number of odd components of $\alpha$, or into
\begin{gather} \{ \gamma + e_1 + \cdots + e_{2m+1}, \gamma \} \label{odd}\end{gather}
in the case of an odd number of odd components of $\alpha$. Concentrating on (\ref{even}) f\/irst, we see there are $\binom{2m}{m}$ ways to manipulate this object, by means of Lemma~\ref{lem1}, into the form
 \[ [\gamma + e_{k_1} + \cdots + e_{k_m}, \gamma + e_{k_{m+1}} + \cdots + e_{k_{2m}} ], \]
where the $\{e_{k_i}\}$ are some permutation of the $\{e_i\}$. Now in this form, we have $m\times m$ choices for the f\/irst application of the exchange lemma, then $(m-1)\times (m-1)$ for the next, etc. So the total number of ways of applying the exchange lemma to this term is $(m!)^2$. And so the total number of ways of decomposing (\ref{even}) into divergence form is
\[ \frac{(2m)!}{m!\times m!} \times m! \times m! = (2m)!.  \]
Similarly, there are $\binom{2m+1}{m}$ ways to manipulate (\ref{odd}) into the form
\[ \{ \gamma + e_{k_1} + \cdots + e_{k_m}, \gamma + e_{k_{m+1}} + \cdots + e_{k_{2m+1}} \} \]
by means of Lemma~\ref{lem1}. We then have $(m+1)\times m$ choices for the f\/irst application of the exchange lemma, followed by $m\times (m-1)$  choices etc. It follows that the total number of ways of decomposing (\ref{odd}) into a total divergence is
\[ \frac{ (2m+1)!}{(m+1)!\times  m!} \times (m+1)! \times m! = (2m+1)!. \]
In both cases then, the total number of dif\/ferent decompositions of the terms (\ref{even}) and (\ref{odd}) is simply $(\# \text{ of odd terms in $\alpha$})!$. We can now state the total number of fundamental $k$-forms that can be constructed using out method.
\begin{theorem}\label{theorem2}
Given a linear PDO with constant coefficients $\mathcal{L}=\sum_\alpha c_\alpha \partial^\alpha$, our method can construct $N(\mathcal{L})$ different fundamental $k$-forms, where
\[ N(\mathcal{L}) = \prod_\alpha O_\alpha !\, \sigma(\alpha) \]
and $O_\alpha$ denotes the number of odd components of $\alpha \in \mathbb{Z}^n_+$.
\end{theorem}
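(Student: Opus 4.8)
The plan is to count, for a fixed operator $\mathcal{L}=\sum_\alpha c_\alpha\partial^\alpha$, the number of distinct forms
$\eta=\sum_j(-1)^{j+1}a_j(x)\,\dd x^1\wedge\cdots\wedge\widehat{\dd x^j}\wedge\cdots\wedge\dd x^n$
that the algorithm of Section~\ref{sec2} can return, and to show this count factorises as $\prod_\alpha O_\alpha!\,\sigma(\alpha)$. First I would pin down the input: by Lemma~\ref{lemma1} and~\eqref{sum}, once $\mathcal{L}$ is given the quantity $\tilde q\mathcal{L}q-q\mathcal{L}^\dagger\tilde q$ is the \emph{fixed} expression $\sum_{|\alpha|\,\mathrm{odd}}c_\alpha\{\alpha,0\}+\sum_{|\alpha|\,\mathrm{even}}c_\alpha[\alpha,0]$. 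The algorithm rewrites this as a divergence $\sum_j\partial_j a_j$ by handling each summand $c_\alpha[\alpha,0]$ or $c_\alpha\{\alpha,0\}$ separately and adding the resulting divergence forms (the general case following by linearity, as noted after Theorem~\ref{theorem1}). Hence the algorithm's output is obtained by \emph{independently} choosing a divergence-form decomposition of each summand and summing, so $N(\mathcal{L})$ equals $\prod_\alpha(\text{number of decompositions of the }\alpha\text{-term})$, provided distinct choices yield distinct forms. It therefore suffices to (i) count the decompositions of a single term, and (ii) argue that the per-term choices combine without interference and without collision.

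For step (i), fix $\alpha$ and write $\alpha=2\gamma+\sum_{i=1}^{O_\alpha}e_i$ as in the text. The reduction of $[\alpha,0]$ (resp.\ $\{\alpha,0\}$) has two stages. In the first stage one applies Lemma~\ref{lem1} to the "even part" $2\gamma$, i.e.\ along a path $\mathcal{P}(\alpha)$; by definition the number of admissible paths is the number of distinct orderings of the multiset with $\gamma_k$ copies of $e_k$, namely $\sigma(\alpha)=|\gamma|!\big/\prod_k\gamma_k!$, and each path terminates with the balanced bracket $[\gamma+e_1+\cdots+e_{O_\alpha},\gamma]$ or $\{\gamma+e_1+\cdots+e_{O_\alpha},\gamma\}$ modulo total derivatives. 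If $O_\alpha=0$ this bracket is $[\gamma,\gamma]=0$ by the antisymmetry of $[\cdot,\cdot]$, the reduction terminates, and $O_\alpha!=1$ records the unique continuation. If $O_\alpha=2m>0$ the second stage acts on~\eqref{even}: there are $\binom{2m}{m}$ applications of Lemma~\ref{lem1} distributing the $2m$ excess vectors between $q$ and $\tilde q$, followed by $m^2\cdot(m-1)^2\cdots 1^2=(m!)^2$ runs of the exchange Lemma~\ref{exchange} ($m\times m$ choices at the first step, $(m-1)\times(m-1)$ at the next, and so on), giving $\binom{2m}{m}(m!)^2=(2m)!=O_\alpha!$. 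If $O_\alpha=2m+1$ the second stage acts on~\eqref{odd}: there are $\binom{2m+1}{m}$ applications of Lemma~\ref{lem1}, then $(m+1)!\,m!$ runs of the exchange lemma ($(m+1)\times m$ choices first, $m\times(m-1)$ next, etc.), with the final pair dispatched by Lemma~\ref{lem2} without further choice, giving $\binom{2m+1}{m}(m+1)!\,m!=(2m+1)!=O_\alpha!$. In every case the $\alpha$-term admits $O_\alpha!\,\sigma(\alpha)$ decompositions.

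For step (ii), the choices belonging to distinct multi-indices are exercised on disjoint summands of the fixed expression $\tilde q\mathcal{L}q-q\mathcal{L}^\dagger\tilde q$, so they are genuinely independent, and multiplying the per-term counts gives $N(\mathcal{L})=\prod_\alpha O_\alpha!\,\sigma(\alpha)$. The one point that genuinely needs care — and which I expect to be the main obstacle — is that the map from an algorithmic choice-sequence to the resulting $\eta$ is \emph{injective}, so that we count forms and not merely derivations. This requires checking that at every branch point the two continuations already differ in some coefficient $a_j$: applying Lemma~\ref{lem1} first in direction $k$ rather than direction $\ell$ leaves a different surviving derivative multi-index on $q$ inside the bracket, hence a different highest-order monomial in the $a_j$ that is split off, and the exchange lemma likewise changes which derivative sits on $q$ versus on $\tilde q$ in the term it generates. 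Making this precise is a finite piece of multi-index bookkeeping; beyond it, the only content is the elementary identities $\binom{2m}{m}(m!)^2=(2m)!$ and $\binom{2m+1}{m}(m+1)!\,m!=(2m+1)!$ used above, together with the already-proved Lemmas~\ref{lem1},~\ref{lem2} and~\ref{exchange}.
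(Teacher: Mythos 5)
Your argument is essentially the paper's own proof: the paper establishes the per-term count $O_\alpha!\,\sigma(\alpha)$ in the discussion preceding the theorem, using exactly the path count $\sigma(\alpha)$ together with the identities $\binom{2m}{m}(m!)^2=(2m)!$ and $\binom{2m+1}{m}(m+1)!\,m!=(2m+1)!$, and then states that the total follows by taking the product over the terms of the PDO. Your additional remark that one must check injectivity of the map from choice-sequences to forms is a point the paper leaves implicit, so your write-up is, if anything, slightly more careful than the original.
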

The proof follows from the arguments previous based on a single term. The total number is found by simply taking the product over all the terms in the PDO. From this result it immediately follows that in Example~\ref{ex1}, the $k$-form we constructed was one of
\[ N(\mathcal{L}) = 1 \times 1 = 1 \]
for $\mathcal{L}=\partial_{tt}-\partial_{xx}$. So in this case, it  was in fact the unique fundamental form for the BVP. However, for the higher order operator in Example~\ref{ex2}, where $\mathcal{L}= \partial_{xx}\partial_{yy}\partial_{zz} +  \partial_{xx}\partial_{yy}+ \partial_{zz}$, the total number of distinct $k$-forms we could construct is
\[ N(\mathcal{L}) = 3! \times 2! \times 1! = 12, \]
any of which could be used to solve the BVP, given suf\/f\/icient data on $\partial \Omega$.
\begin{example}
We consider bi-harmonic functions on $\mathbb{R}^3$, i.e.\ classical solutions to $\triangle ^2 q = 0$. In Cartesian coordinates then, the relevant dif\/ferential operator is given by
\[ \mathcal{L} = \partial_{xxxx} + \partial_{yyyy} + \partial_{zzzz} + 2\partial_{xxyy} + 2\partial_{yyzz} + 2\partial_{zzxx}. \]
Then employing the notation introduced earlier, we seek to decompose
\[ [v_1, 0]+ [v_2, 0]+ [v_3, 0]+2 [v_4,0]+2 [v_5,0]+2 [v_6,0], \]
where the $\{v_j\}$ are def\/ined by
\begin{gather*} v_1 = (4,0,0),  \qquad v_4 = (2,2,0), \\
v_2 = (0,4,0),  \qquad v_5 = (0,2,2), \\
v_3 = (0,0,4),  \qquad v_6 = (2,0,2). \end{gather*}

From which, we can read of\/f the number of distinct fundamental $k$-forms produced by our method to be
\[ N(\mathcal{L}) = 1!\times1!\times1!\times2!\times2!\times2! = 8. \]
Below is an example of one such decomposition that can be used to construct the fundamental form $\eta$, where the $\{\sigma_j\}$ are def\/ined so that $\sum_j \sigma_j^2 =0$ (so that the adjoint solution $\tilde{q}=\exp (i\sigma_j x^j )$ solves $\triangle ^2 \tilde{q}=0$)
\begin{gather*} 0 =\partial_x \left(  e^{i \sigma_jx^j} \left\{ q_{xxx}+ i\sigma_1^3 q -i\sigma_1 q_{xx} - \sigma_1^2 q_x + 2q_{xyy}+i\sigma_1\sigma_2^2 q + 2q_{xzz} + 2i\sigma_1 \sigma_3^2 q \right\}\right) \\
 \phantom{0=}{} + \partial_y\left(  e^{i \sigma_jx^j} \{  q_{yyy} + i\sigma_2^3 q - i\sigma_2 q_{yy}-\sigma_2^2 q_y - 2i\sigma_1 q_{xy} - 2\sigma_1\sigma_2 q_x + 2q_{yzz} + 2i\sigma_1\sigma_3^2 q \} \right) \\
 \phantom{0=}{}  +\partial_z \left(  e^{i \sigma_jx^j}\{q_{zzz} +i\sigma_3^3 q -i\sigma_3 q_{zz} - \sigma_3^2 q_z -2i\sigma_1 q_{xz} -2 \sigma_1\sigma_3 q_x - 2i\sigma_2 q_{yz} - 2\sigma_2\sigma_3 q_y \}\right),
 \end{gather*}
where $(x,y,z)\equiv (x^1, x^2, x^3)$. In constructing this equation, we simply made repeated use of Lemma~\ref{lem1} -- there is very little computation involved.
\end{example}
We must now address the question of whether $N(\mathcal{L})$ is well def\/ined -- that is to say: is $N(\mathcal{L})$ a~coordinate independent object? Suppose $\Sigma_1$ is the set of fundamental $k$-forms associated with~$\mathcal{L}$ on the domain $\Omega_1$, and let $\rho:\Omega_1 \rightarrow \Omega_2$ be a dif\/feomorphism, with inverse $\phi$. Then the question becomes: is $N(\mathcal{L})$ on $\Omega_1$ as on $\Omega_2$. This is partially  addressed in the following lemma.

\begin{lemma}\label{lemma5}
Each fundamental $k$-form in $\Omega_1$ induces a corresponding fundamental $k$-form in $\Omega_2$ and vice versa.
\end{lemma}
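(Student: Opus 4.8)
The plan is to transport forms between $\Omega_1$ and $\Omega_2$ by pullback along $\rho$ and $\phi=\rho^{-1}$, and to lean on the naturality of the exterior derivative, $\rho^\ast\circ\dd=\dd\circ\rho^\ast$, which holds for the diffeomorphism $\rho$ exactly as for any smooth map. First I would fix a fundamental $(n-1)$-form $\eta\in\Lambda^{n-1}(\Omega_2)$ --- associated via~\eqref{eta} with $\mathcal{L}$ and a pair $\{q,\tilde q\}$ on $\Omega_2$ --- and set $\eta_1\defn\rho^\ast\eta\in\Lambda^{n-1}(\Omega_1)$. Writing $x=\rho(y)$ and $J\defn\det(\partial x^i/\partial y^j)$, which is smooth and nowhere zero since $\rho$ is a diffeomorphism, one has $\rho^\ast(\dd x^1\wedge\cdots\wedge\dd x^n)=J\,\dd y^1\wedge\cdots\wedge\dd y^n$, while $\rho^\ast$ of a scalar is composition with $\rho$; naturality then gives
\[ \dd\eta_1=\rho^\ast(\dd\eta)=\big[(\tilde q\mathcal{L}q-q\mathcal{L}^\dagger\tilde q)\circ\rho\big]\,J\,\dd y^1\wedge\cdots\wedge\dd y^n. \]

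\textbf{Recovering the divergence structure.} Next I would recognise the right-hand side as again having the shape of~\eqref{eta}, now over $\Omega_1$. Put $p\defn q\circ\rho$, $\tilde p\defn\tilde q\circ\rho$, and let $\mathcal{M}$ be the linear differential operator on $\Omega_1$ defined by $\mathcal{M}f\defn J\,\big(\mathcal{L}(f\circ\phi)\big)\circ\rho$ (same order as $\mathcal{L}$, non-constant coefficients in general). The chain rule gives $J\,(\tilde q\mathcal{L}q)\circ\rho=\tilde p\,\mathcal{M}p$ and $J\,(q\mathcal{L}^\dagger\tilde q)\circ\rho=p\,\mathcal{M}^\dagger\tilde p$, so that $\dd\eta_1=\big(\tilde p\,\mathcal{M}p-p\,\mathcal{M}^\dagger\tilde p\big)\,\dd y^1\wedge\cdots\wedge\dd y^n$. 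The point to be checked here is that the adjoint of $\mathcal{M}$ with respect to the flat $L^2(\Omega_1)$ inner product is $\mathcal{M}^\dagger f=J\,\big(\mathcal{L}^\dagger(f\circ\phi)\big)\circ\rho$; this is precisely what the change-of-variables formula delivers, because passing to the $y$-coordinates turns $\langle\cdot,\cdot\rangle_{L^2(\Omega_2)}$ into the $|J|$-weighted inner product on $\Omega_1$, and folding that density into $\mathcal{M}$ (the sign of $J$ is constant on the connected domain $\Omega_1$ and cancels) reduces $\langle\tilde p,\mathcal{M}p\rangle_{L^2(\Omega_1)}=\langle\mathcal{M}^\dagger\tilde p,p\rangle_{L^2(\Omega_1)}$ to the defining identity $\langle\tilde q,\mathcal{L}q\rangle_{L^2(\Omega_2)}=\langle\mathcal{L}^\dagger\tilde q,q\rangle_{L^2(\Omega_2)}$.

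\textbf{Conclusion, converse, and the main obstacle.} Granting this, $\eta_1$ is of the form~\eqref{eta} for $\mathcal{M}$ on $\Omega_1$: since $J$ never vanishes, $\eta_1$ is closed iff $\tilde p\,\mathcal{M}p-p\,\mathcal{M}^\dagger\tilde p=0$, equivalently (as $\rho$ is onto) iff $\tilde q\mathcal{L}q-q\mathcal{L}^\dagger\tilde q=0$ on $\Omega_2$; and because $\rho^\ast$ is a local operation and $\eta$ is bilinear in $q,\tilde q$ and their derivatives, $\eta_1$ is local and bilinear in $p,\tilde p$ and their derivatives --- so $\eta_1$ is a fundamental $k$-form in $\Omega_1$ in the sense of Theorem~\ref{theorem1}. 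The converse is the same argument with $\rho$ and $\phi$ interchanged, and since $\phi=\rho^{-1}$ the assignments $\eta\mapsto\rho^\ast\eta$ and $\zeta\mapsto\phi^\ast\zeta$ are mutually inverse, which yields the asserted correspondence (indeed a bijection between the two sets of fundamental $k$-forms). I expect the main obstacle to be exactly the Jacobian bookkeeping in the middle step: one must verify that absorbing $J$ into the transported operator is precisely what makes the pulled-back adjoint agree with the adjoint of the pulled-back operator, so that $\dd\eta_1$ reproduces the divergence form~\eqref{eta} on the nose rather than only up to a nonvanishing scalar factor. Everything else --- naturality of $\dd$, locality and bilinearity of $\rho^\ast$, and invertibility of pullback under a diffeomorphism --- is routine.
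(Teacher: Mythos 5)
Your proposal is correct and rests on the same core mechanism as the paper's proof: pull back along the diffeomorphism and use $\rho^\ast\circ\dd=\dd\circ\rho^\ast$. The difference is one of thoroughness rather than route. The paper's proof consists only of the observation that $\dd(\rho^\ast\eta)=\rho^\ast(\dd\eta)=0$, i.e.\ it checks that \emph{closedness} is transported; it does not verify that the pulled-back form is still \emph{fundamental} in the sense of the paper --- closed if and \emph{only if} the relevant equation holds --- nor that $\dd(\rho^\ast\eta)$ is again of the divergence form~\eqref{eta} for some operator on $\Omega_1$. Your middle step supplies exactly this: by absorbing the Jacobian $J$ into the transported operator $\mathcal{M}f=J\,\bigl(\mathcal{L}(f\circ\phi)\bigr)\circ\rho$ and checking via change of variables that $\mathcal{M}^\dagger f=J\,\bigl(\mathcal{L}^\dagger(f\circ\phi)\bigr)\circ\rho$, you recover $\dd\eta_1=\bigl(\tilde p\,\mathcal{M}p-p\,\mathcal{M}^\dagger\tilde p\bigr)\,\dd y^1\wedge\cdots\wedge\dd y^n$ on the nose, and the nonvanishing of $J$ gives the ``only if'' direction. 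This buys a genuinely stronger conclusion (a bijection of sets of fundamental forms, with the equivalence $\dd\eta_1=0\Leftrightarrow\tilde q\mathcal{L}q=q\mathcal{L}^\dagger\tilde q$ preserved), at the cost of the Jacobian and adjoint bookkeeping; note only that $\mathcal{M}$ will in general have non-constant coefficients, so the transported form lies outside the constant-coefficient class in which the paper's construction operates --- consistent with the paper's own caveat, immediately after the lemma, that the induced forms need not be ones its algorithm would produce.
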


\begin{proof}
Take $\eta \in \Sigma_2$, i.e.\ $\eta$ is fundamental on $\Omega_2$. Then the pull-back $\rho^*$ induces a fundamental form on $\Omega_1$. Indeed, setting $\theta = \rho^* \eta$, we f\/ind
\[ \dd \theta = \dd \left(\rho^* \eta\right) = \rho^* \dd \eta = 0 \]
since the pull-back commutes with the exterior derivative. We conclude that for each element of $\Sigma_2$ there is a corresponding one in $\Sigma_1$. Using the same argument with the pull-back $\phi^*$, completes the proof.
\end{proof}

The reason this doesn't fully answer our question, is because of the following. Let $\eta$, $\theta$ be two fundamental $k$-forms on $\Omega$ constructed via our method and consider $\phi^*\eta$ and $\phi^*\theta$. These are fundamental on $\rho(\Omega)$, but \emph{are not} necessarily of the form constructed via our method. That is to say, applying our method to $\mathcal{L}$ \emph{after} applying the dif\/feomorphism to $\Omega$, wont necessarily yield the same set of fundamental $k$-forms as it would if we computed the induced fundamental $k$-forms via the appropriate pull-back.

Now although we have shown that many fundamental forms may be constructed, they are not necessarily independent. We use the fact that there is no cohomology on the class of domains we are working on, so any closed form is exact. Now consider two fundamental $k$-forms $\eta$, $\theta$ that dif\/fer by some closed (hence exact) form $\dd \psi$. These two forms yield the same global relation, indeed
\[ \int_{\partial\Omega} (\eta-\theta) = \int_{\partial\Omega} \dd\psi = 0 \]
since $\dd^2 = 0$. We should conf\/ine our attention then, to the equivalence class of fundamental forms in which $\eta_1 \sim \eta_2\, \Leftrightarrow\, \eta_1 - \eta_2 = \dd \theta$, for some $\theta \in \Lambda^{k-1}(\Omega)$. Labelling the set of exact dif\/ferential forms on $\Omega$ by $\Pi$, and the set of fundamental $k$-forms on $\Omega$ by $\Sigma$, we have the following result.
\begin{lemma}
There is a unique fundamental $k$-form in $\Sigma / \Pi$.
\end{lemma}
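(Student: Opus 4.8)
The plan is to show that $\Sigma$ is exactly one coset of the space of closed $(n-1)$-forms, and that quotienting by $\Pi$ collapses that coset to a single point. First I would record that, by the defining relation \eqref{eta}, \emph{every} $\eta\in\Sigma$ satisfies
\[ \dd\eta = \big(\tilde q\,\mathcal{L}q - q\,\mathcal{L}^\dagger\tilde q\big)\,\dd x^1\wedge\cdots\wedge\dd x^n, \]
so all elements of $\Sigma$ have one and the same exterior derivative. By Theorem~\ref{theorem1} the set $\Sigma$ is non-empty, so fix a representative $\eta_0\in\Sigma$. If $\eta\in\Sigma$ then $\dd(\eta-\eta_0)=0$, and conversely if $\zeta$ is any closed $(n-1)$-form then $\dd(\eta_0+\zeta)=\dd\eta_0$, so $\eta_0+\zeta$ again satisfies \eqref{eta} and lies in $\Sigma$. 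Hence $\Sigma=\eta_0+Z$, where $Z\subset\Lambda^{n-1}(\Omega)$ denotes the space of closed $(n-1)$-forms.

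Next I would pass to the quotient. Two fundamental forms $\eta_1,\eta_2$ are identified in $\Sigma/\Pi$ precisely when $\eta_1-\eta_2\in\Pi$, i.e.\ when $\eta_1-\eta_2$ is exact. By the previous paragraph $\eta_1-\eta_2\in Z$, and since $\Omega\subset\mathbb{R}^n$ is contractible the Poincar\'e lemma gives $\eta_1-\eta_2=\dd\theta$ for some $\theta\in\Lambda^{k-1}(\Omega)$ (there is no degree obstruction, as $k=n-1\geq 1$). Thus any two elements of $\Sigma$ represent the same class, and combined with $\Sigma\neq\emptyset$ this shows $\Sigma/\Pi$ has exactly one element. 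Equivalently, the construction $\Sigma=\eta_0+Z$ and $\Pi=\dd\Lambda^{k-1}(\Omega)\subset Z$ gives a bijection $\Sigma/\Pi\cong Z/\dd\Lambda^{k-1}(\Omega)=H^{n-1}(\Omega)$, and the latter is trivial for a contractible $\Omega$.

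There is essentially no serious obstacle; the only point requiring a word of care is that the coefficients of the forms in $\Sigma$ are local bilinear differential expressions in $q,\tilde q$ rather than arbitrary scalar functions, so one should either invoke the Poincar\'e lemma with $(q,\tilde q)$ held fixed, or observe that the standard homotopy operator producing $\theta$ acts on coefficients and hence preserves locality and bilinearity — consistent with the ``local and bilinear'' conclusion of Theorem~\ref{theorem1}. This lemma also makes precise the remark preceding it: although the algorithm of Section~\ref{sec2} produces $N(\mathcal{L})$ superficially distinct fundamental $k$-forms, they all differ by exact forms and therefore all yield the \emph{same} global relation upon integration over $\partial\Omega$.
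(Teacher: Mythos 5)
Your proof is correct and follows essentially the same route as the paper: any two fundamental $k$-forms have the same exterior derivative by \eqref{eta}, so their difference is closed, hence exact on the contractible domain $\Omega$, and therefore they coincide in $\Sigma/\Pi$. The paper phrases this as a short contradiction argument, while you give the direct version with some additional (correct but inessential) framing via $H^{n-1}(\Omega)=0$ and the explicit observation that $\Sigma\neq\emptyset$.
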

\begin{proof}
Assume that $\Sigma/\Pi$ contains $m>1$ fundamental $k$-forms, and we consider the dif\/ference of two such $k$-forms: $\psi = \eta_i - \eta_j$, $i\neq j$. Note that $\mathrm d\psi = 0$ \emph{regardless} of whether $\tilde{q}\mathcal{L}q=q\mathcal{L}^\dagger \tilde{q}$. Since we are working on a cohomologically trivial space, it follows that $\exists\, \omega \in \Lambda^{k-1}(\Omega)$ such that $\psi = \mathrm d\omega$. However, this implies that $\eta_i$ and $\eta_j$ are equivalent in $\Sigma /\Pi$, contradicting our hypothesis. This is true for each pair $(i,j)$, so we may conclude that $\Sigma / \Pi$ contains one unique element.
\end{proof}

The consequences of this fact are as follows. Even though we have shown that we can construct several dif\/ferent fundamental $k$-forms for a given PDE, they are all equivalent up to some closed form, the closeness of which \emph{does not} depend on the condition $\tilde{q}\mathcal{L}q =q\mathcal{L}^\dagger \tilde{q}$. Therefore, any of the fundamental $k$-forms we construct will generate the same global relation. In practice then, we should choose a particular decomposition that yields a fundamental $k$-form whose coef\/f\/icients contain derivatives of $q$ that match as closely as possible, the prescribed data on the boundary. This will then lead to a global relation that can be dealt with ef\/f\/iciently, by using the appropriate data on the boundary.

\subsection{Integral representations}
Here we use the fundamental $k$-form to give an explicit solution to a BVP associated with the polynomial dif\/ferential operator $\mathcal{L} = \mathcal{L}(\partial_1, \partial_2, \ldots, \partial_n)$. We expand on the observation given below:
\begin{lemma}[Fokas \& Zyskin]
Given a polynomial differential operator $\mathfrak{p}(\partial_1, \partial_2)$, and $\eta \in \Lambda^1(\Omega)$ such that
\[ \dd \eta = \mathfrak{p}(ik_1, ik_2)q(x_1, x_2) e^{-ik_1x_1-ik_2x_2}\, \dd x_1 \wedge \dd x_2, \]
where $\mathfrak{p}(\partial_1, \partial_2)q=0$ in $\Omega$, then if $q$ exists it can be represented in the form
\[ q(x_1, x_2) = \frac{1}{(2\pi)^2} \int_{\mathbb{R}^2}\dd k_1\, \dd k_2 \int_{\partial\Omega} \frac{e^{ik_1x_1 + ik_2x_2}\eta (y_1, y_2, k_1, k_2)}{\mathfrak{p}(ik_1, ik_2)}, \qquad (y_1, y_2) \in \partial\Omega. \]
\end{lemma}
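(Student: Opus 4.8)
\emph{Proof proposal.} The plan is to integrate the defining relation for $\eta$ over $\Omega$, use Stokes' theorem to pass to an integral over $\partial\Omega$, recognise the resulting integral over $\Omega$ as a Fourier transform of $q$, and then invert. First I would extend $q$ by zero outside $\Omega$; since $\Omega$ is bounded with piecewise smooth boundary and $q$ is assumed smooth up to $\partial\Omega$, this extension lies in $L^1(\mathbb{R}^2)$ with compact support, so
\[ \widehat{q}(k_1,k_2) \defn \int_\Omega q(x_1,x_2)\, e^{-ik_1x_1-ik_2x_2}\, \dd x_1\, \dd x_2 \]
is well defined (indeed entire in $(k_1,k_2)$, by Paley--Wiener). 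Integrating the hypothesis $\dd\eta = \mathfrak{p}(ik_1,ik_2)\, q\, e^{-ik_1x_1-ik_2x_2}\, \dd x_1\wedge\dd x_2$ over $\Omega$ and applying Stokes' theorem gives
\[ \int_{\partial\Omega}\eta(y_1,y_2,k_1,k_2) = \mathfrak{p}(ik_1,ik_2)\, \widehat{q}(k_1,k_2). \]

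Next I would solve for $\widehat{q}$, writing $\widehat{q}(k_1,k_2) = \mathfrak{p}(ik_1,ik_2)^{-1}\int_{\partial\Omega}\eta$. The only place this could fail is on the zero set $Z = \{(k_1,k_2)\in\mathbb{R}^2 : \mathfrak{p}(ik_1,ik_2)=0\}$, which is a proper real-algebraic subset of $\mathbb{R}^2$ (as $\mathfrak{p}$ is a nontrivial operator) and hence of Lebesgue measure zero; away from $Z$ the identity holds pointwise, and since $\widehat{q}$ is continuous the quotient on the right extends continuously across $Z$, so no genuine singularity is introduced. Thus the quotient representation of $\widehat{q}$ is valid almost everywhere, which is all that Fourier inversion requires.

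Finally I would apply the Fourier inversion theorem: under the standing regularity assumption on the data (so that $\widehat{q}\in L^1(\mathbb{R}^2)$), or more generally interpreting the integrals in the tempered-distribution sense as in the Introduction,
\[ q(x_1,x_2) = \frac{1}{(2\pi)^2}\int_{\mathbb{R}^2} e^{ik_1x_1+ik_2x_2}\, \widehat{q}(k_1,k_2)\, \dd k_1\, \dd k_2 . \]
Substituting the quotient expression for $\widehat{q}$ and interchanging the $\dd k_1\,\dd k_2$ integral with the integral over $\partial\Omega$ (Fubini) yields exactly the claimed representation. Note that since $q$ is only assumed to exist, this argument shows that \emph{any} solution must have this form --- uniqueness of the Fourier transform does the work --- rather than constructing one.

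The main obstacle is analytic rather than algebraic: controlling the $k$-integral near $Z$ and at infinity. In general $\widehat{q}$ need not be integrable, so the inversion integral and the interchange of integrals must be justified either under additional decay hypotheses on the data (the function-class caveat flagged in the Introduction), or by regularising $\mathfrak{p}(ik)^{-1}$ near its zeros and taking a principal value, or --- most in the spirit of the method --- by deforming the $(k_1,k_2)$ contours into $\mathbb{C}^2$ so that the exponential factors in $\eta$ supply the missing decay. I would isolate this as the one step needing care and treat the remaining manipulations (Stokes' theorem and Fubini) as routine. The extension to $\Omega\subset\mathbb{R}^n$ then proceeds verbatim, using the fundamental $(n-1)$-form supplied by Theorem~\ref{theorem1} in place of $\eta$ and the $n$-dimensional Fourier transform.
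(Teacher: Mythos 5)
Your argument is correct and follows essentially the same route the paper indicates: the paper disposes of this lemma by citing Fokas--Zyskin and noting that it ``essentially follows from'' the distributional identity $\int_{\mathbb{R}}\dd k\, e^{ik(x-y)}=2\pi\delta(x-y)$, i.e.\ exactly the Stokes-then-Fourier-inversion computation you carry out (and the paper's own $n$-dimensional extension proceeds by the same integration of $\dd\eta$ over $\Omega$ and division by the symbol). Your additional care about the zero set of $\mathfrak{p}(ik)$ and the decay of $\widehat{q}$ at infinity addresses genuine points the paper leaves implicit, so nothing is missing.
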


A proof of this is found in \cite{fokas2002fdf}, it essentially follows from the distributional relation
\[ \int_{\mathbb{R}} \dd k\, e^{ik(x-y)} = 2\pi \delta (x-y). \]
We can now extend this result to arbitrary dimension as follows. We construct an fundamental $k$-form $\eta$ using the lemmas of the previous section, but we \emph{do not} solve the adjoint problem. Instead, we introduce $\tilde{q} = \exp(-ik_ix^i)$ from which it follows that
\[ \mathcal{L}^\dagger \tilde{q} = \mathcal{L}(-\partial_1, -\partial_2, \ldots, -\partial_n) e^{-ik_ix^i} = \mathcal{L}(ik_1, ik_2, \ldots, ik_n) e^{-ik_ix^i}.\]
Now using equation (\ref{eta}), we f\/ind that
\[ \dd \eta = - \mathcal{L}(ik_1, ik_2, \ldots, ik_n) q(x_1, \ldots, x_n) e^{-ik_ix^i}\, \dd x^1 \wedge \cdots \wedge \dd x^n \]
assuming $\mathcal{L}q=0$ in $\Omega$. This then gives the integral representation for $q$ as
\[ q(x) = \frac{-1}{(2\pi)^n}\int_{\mathbb{R}^n} \dd k_1\cdots \dd k_n \int_{\partial\Omega} \frac{e^{ik_i x^i} \eta (y,k)}{\mathcal{L}(ik_1, \ldots,ik_n)}, \qquad y\in\partial\Omega. \]
We summarise this result in the following.
\begin{theorem}
Given a boundary value problem in $\Omega$, with associated polynomial differential operator $\mathcal{L}=\mathcal{L}(\partial_1, \ldots, \partial_n)$, on the assumption that a solution exists, it can be represented by
\[ q(x) = \frac{-1}{(2\pi)^n}\int_{\mathbb{R}^n} \dd k_1\cdots \dd k_n \int_{\partial\Omega} \frac{e^{ik_i x^i} \eta (y,k)}{\mathcal{L}(ik_1, \ldots,ik_n)},  \]
where $y\in\partial\Omega$ and $\eta$ is a fundamental $k$-form for $\mathcal{L}$ associated with the adjoint solution $e^{-ik_ix^i}$.
\end{theorem}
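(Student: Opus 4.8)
The plan is to run the machinery of the previous subsection in reverse: build the fundamental form, feed it a plane wave in place of an adjoint solution, and then invert the resulting Fourier relation. Concretely, I would first invoke Theorem~\ref{theorem1} to obtain a fundamental $(n-1)$-form $\eta=\eta(q,\tilde q)$ for $\mathcal L$, so that $\dd\eta=(\tilde q\,\mathcal L q-q\,\mathcal L^\dagger\tilde q)\,\dd x^1\wedge\cdots\wedge\dd x^n$ with $\eta$ local and bilinear in $q,\tilde q$ and their derivatives. Since the coefficients of $\mathcal L$ are constant, $\mathcal L^\dagger=\mathcal L(-\partial_1,\dots,-\partial_n)$; substituting $\tilde q=\exp(-ik_ix^i)$ therefore gives $\mathcal L^\dagger\tilde q=\mathcal L(ik_1,\dots,ik_n)\,e^{-ik_ix^i}$, and if $\mathcal Lq=0$ in $\Omega$ then (\ref{eta}) collapses to
\[ \dd\eta=-\,\mathcal L(ik_1,\dots,ik_n)\,q(x)\,e^{-ik_ix^i}\,\dd x^1\wedge\cdots\wedge\dd x^n. \]

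Next I would apply Stokes' theorem on $\Omega$. Writing $\widehat{q}_\Omega(k)\defn\int_\Omega q(x)\,e^{-ik_ix^i}\,\dd x$ for the Fourier transform of $q$ extended by zero outside $\Omega$, this produces the global relation
\[ \int_{\partial\Omega}\eta(y,k)=-\,\mathcal L(ik_1,\dots,ik_n)\,\widehat{q}_\Omega(k),\qquad y\in\partial\Omega, \]
valid for every $k\in\mathbb R^n$. Dividing by the symbol $\mathcal L(ik_1,\dots,ik_n)$ expresses $\widehat{q}_\Omega$ purely through a boundary integral, and then the Fourier inversion theorem --- equivalently, integrating $e^{ik_ix^i}$ against both sides over $k\in\mathbb R^n$ and collapsing with the distributional identity $\int_{\mathbb R}\dd k\,e^{ik(x-y)}=2\pi\delta(x-y)$ in each of the $n$ coordinates, exactly as Fokas and Zyskin do for $n=2$ --- recovers, for $x\in\Omega$,
\[ q(x)=\frac{-1}{(2\pi)^n}\int_{\mathbb R^n}\dd k_1\cdots\dd k_n\int_{\partial\Omega}\frac{e^{ik_ix^i}\,\eta(y,k)}{\mathcal L(ik_1,\dots,ik_n)}, \]
which is the asserted representation.

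The substitution and the Stokes step are routine; the genuine work is in the inversion. Note first that the apparent danger of dividing by the symbol is harmless for the purpose of this identity: by the global relation the quotient $\int_{\partial\Omega}\eta(y,k)/\mathcal L(ik_1,\dots,ik_n)$ equals $-\widehat{q}_\Omega(k)$ identically, a smooth function, so the real-variety zeros of $\mathcal L(ik_1,\dots,ik_n)$ are removable in the combination that actually appears. What must be justified is that $q$ (equivalently its zero-extension) is regular enough for $\widehat{q}_\Omega$ to be a legitimate tempered distribution to which Fourier inversion applies, and that the $\dd k$-integration may be interchanged with the boundary integration and passed through the distributional limit; following~\cite{fokas2002fdf} this needs only mild decay and smoothness of $q$, and introduces no new difficulty beyond the $n=2$ case, since both Stokes' theorem and the inversion identity are dimension-agnostic. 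I would close by observing that the non-uniqueness of $\eta$ (Theorem~\ref{theorem2}) does not affect the formula, since two fundamental forms differ by an exact form whose integral over $\partial\Omega$ vanishes.
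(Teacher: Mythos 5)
Your proposal follows the paper's argument exactly: substitute $\tilde q=\exp(-ik_ix^i)$ into the fundamental form so that \eqref{eta} gives $\dd\eta=-\mathcal L(ik_1,\dots,ik_n)q\,e^{-ik_ix^i}\,\dd x^1\wedge\cdots\wedge\dd x^n$, apply Stokes' theorem, divide by the symbol, and invert via the distributional identity $\int_{\mathbb R}\dd k\,e^{ik(x-y)}=2\pi\delta(x-y)$, just as Fokas and Zyskin do for $n=2$. Your additional remarks on the removability of the symbol's zeros and on the regularity needed for the inversion go slightly beyond what the paper records, but the route is the same.
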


\begin{remark}
Note that this representation is in accordance with the Ehrenpreis principle from the theory of complex analysis of several variables. See for instance \cite{ehrenpreis1970fas}.
\end{remark}

The term `adjoint solution' is a misnomer, $\tilde{q}$ does \emph{not} solve the adjoint problem. It is simply the function we choose for $\tilde{q}$ in the decomposition of (\ref{sum}). Note that the non-uniqueness of $\eta$ discussed earlier, does not af\/fect this result. Indeed, the addition of some exact form to $\eta(x,k)$ will not contribute since $\int_{\partial\Omega}\dd \psi (x,k) = 0$.

\subsection{Systems of PDEs}
Here we extend the results proven in the last section, to deal with systems of coupled PDEs. The vector $\phi = (\phi_1, \phi_2, \dots, \phi_n)$ will be denoted by $\phi_i$ and we shall assume summation convention throughout. The dif\/ferential operator $\mathcal{L}$ will be expressed component wise, so that we denote $\mathcal{L}=\mathcal{L}_{ij}$. Studying linear systems of the form $\mathcal{L}[\phi]=0$ then translated to solving the coupled, linear set of PDEs
\[ \begin{bmatrix} \mathcal{L}_{11} & \cdots & \mathcal{L}_{1n} \\
\vdots & \ddots & \vdots \\
\mathcal{L}_{n1} & \cdots & \mathcal{L}_{nn} \end{bmatrix} \begin{bmatrix} \phi_1 \\
\vdots \\
\phi_n \end{bmatrix} = 0. \]

From this, we construct the formal adjoint, $\mathcal{L}^\dagger$ def\/ined by
\[\mathcal{L}^\dagger = \begin{bmatrix} \mathcal{L}_{11}^\dagger & \cdots & \mathcal{L}_{n1}^\dagger \\
\vdots & \ddots & \vdots \\
\mathcal{L}_{1n}^\dagger & \cdots & \mathcal{L}_{nn}^\dagger \end{bmatrix}.\]
We then decompose an analogous object to that in the previous section, given by
\[ \tilde{\phi}_i \mathcal{L}_{ij}\phi_j - \phi_i \{ \mathcal{L}^\dagger\}_{ij} \tilde{\phi}_j.  \]
Using the def\/inition of the adjoint, and changing the order of summation in the second term, we see this is equivalent to decomposing
\[ \tilde{\phi}_i \mathcal{L}_{ij}\phi_j - \phi_j  \mathcal{L}^\dagger_{ij} \tilde{\phi}_i. \]
Now we may apply the results of the previous section to each of the $n^2$ terms in this sum. Since we will want the closeness of the fundamental $k$-form to be equivalent to $\mathcal{L}[\phi]=0$, we must choose the solution to the adjoint problem so that each of the scalar f\/ields $\tilde{\phi}_i$ are linearly independent functions. With this condition, we see that we must have $\mathcal{L}_{ij}\phi_j=0$, $i=1, \ldots , n$, for the fundamental $k$-form to be closed. This is exactly the condition $\mathcal{L}[\phi]=0$.

\subsection{Unsteady Stokes equations}\label{sec2.4}

In this section we construct the fundamental 3-form associated with the unsteady, unforced Stokes equations, which are a direct linearisation of the classical Navier--Stokes equations. We use a separable to solution to the adjoint problem suitable for geometries which lend themselves to Cartensians. We deal with a divergence free velocity f\/ield $u(\mathbf{x},t)=(u_1, u_2, u_3)$ with and some scalar f\/ield $p(\mathbf{x},t)$, such that
\begin{gather}
\partial_t u_1 - \nu\Delta u_1 + \partial_x p = 0, \label{stokes1}\\
\partial_t u_2 - \nu\Delta u_2 + \partial_y p = 0, \\
\partial_t u_3 - \nu\Delta u_3 + \partial_z p = 0, \\
\partial_x u_1 + \partial_y u_2 +\partial_z u_3 =0, \label{stokes4}
\end{gather}
where $\nu >0$ is constant and $\Delta$ is the Laplacian on $\mathbb{R}^3$. We now introduce the 4-vector $\phi (\mathbf{x},t) = (\phi_1, \phi_2, \phi_3,\phi_4)$ where $\phi_4 = p(\mathbf{x},t)$ and $\phi_i (\mathbf{x},t) = u_i (\mathbf{x},t)$ for $i=1,2,3$. With this notation, equations (\ref{stokes1})--(\ref{stokes4}) take on the form $\mathcal{L}\phi = 0$, where
\[ \mathcal{L} \defn \begin{bmatrix} \partial_t - \nu \Delta & 0 & 0 & \partial_x \\ 0 & \partial_t - \nu \Delta & 0 & \partial_y \\ 0&0&\partial_t - \nu \Delta&\partial_z \\ \partial_x & \partial_y & \partial_z & 0 \end{bmatrix}. \]
We now employ the methods introduced in the last section to analyse this coupled system of linear PDEs, and see how the fundamental 3-form associated with the problem can give insights into the associated IBVP. Note f\/irst
\[ \mathcal{L}^\dagger = \begin{bmatrix} -\partial_t - \nu \Delta & 0 & 0 & -\partial_x \\ 0 & -\partial_t - \nu \Delta & 0 & -\partial_y \\ 0&0&-\partial_t - \nu \Delta&-\partial_z \\ -\partial_x & -\partial_y & -\partial_z & 0 \end{bmatrix}.\]
We f\/irst solve the associated adjoint problem, def\/ined by $\mathcal{L}^\dagger\tilde{\phi}=0$, incorporating a spectral parameter. In doing this, we are able to integrate this closed form over our domain $\Omega$, from which Stokes' theorem allows us to realise a constraining equation involving Fourier-type integrals of data on $\partial\Omega$.
\begin{lemma}
A solution to the adjoint problem of the required form is given by
\[ \tilde{\phi} = \begin{bmatrix} \mathbf{k} \\ \xi_3 \end{bmatrix}e^{-i\mathbf{k}\cdot\mathbf{x} +i \xi_3 t}, \]
where $\mathbf{k} = \mathbf{k}(\xi)$ is an isotropic vector in $\mathbb{C}^3$, $\xi = (\xi_1, \xi_2)$ is a $2$-spinor and $\xi_3 \in \mathbb{C}$.
\end{lemma}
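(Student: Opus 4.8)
The plan is to substitute the separable ansatz $\tilde\phi = \mathbf{v}\, e^{-i\mathbf{k}\cdot\mathbf{x} + i\xi_3 t}$ with a constant amplitude $\mathbf{v} = (v_1, v_2, v_3, v_4)^{\top} \in \mathbb{C}^4$ directly into $\mathcal{L}^\dagger \tilde\phi = 0$ and read off the algebraic constraints on $\mathbf{k}$, $\xi_3$ and $\mathbf{v}$. Writing $E \defn e^{-i\mathbf{k}\cdot\mathbf{x} + i\xi_3 t}$, one has $\partial_t E = i\xi_3 E$, $\nabla E = -i\mathbf{k}\,E$ and hence $\Delta E = -(\mathbf{k}\cdot\mathbf{k})E$, where $\mathbf{k}\cdot\mathbf{k} \defn k_1^2 + k_2^2 + k_3^2$ is the complex-bilinear form (not the Hermitian norm) on $\mathbb{C}^3$. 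Since $\mathcal{L}^\dagger$ has constant coefficients, every entry of $\mathcal{L}^\dagger\tilde\phi$ is a constant multiple of $E$, so the PDE collapses to a linear system for $\mathbf{v}$.

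Carrying this out, the first three (momentum) rows become $(-i\xi_3 + \nu(\mathbf{k}\cdot\mathbf{k}))\,v_j + i v_4 k_j = 0$ for $j = 1,2,3$, while the fourth (incompressibility) row becomes $i\,(v_1 k_1 + v_2 k_2 + v_3 k_3) = 0$, i.e. $(v_1,v_2,v_3)\cdot\mathbf{k} = 0$. The momentum rows force $(v_1, v_2, v_3)$ to be proportional to $\mathbf{k}$; substituting this into the incompressibility row yields a scalar multiple of $\mathbf{k}\cdot\mathbf{k}$, so a nontrivial solution exists only if $\mathbf{k}$ is isotropic, $\mathbf{k}\cdot\mathbf{k} = 0$. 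Under that condition the momentum coefficient collapses to $-i\xi_3$, giving $(v_1, v_2, v_3) = (v_4/\xi_3)\mathbf{k}$; normalising $v_4 = \xi_3$ produces exactly $\tilde\phi = (\mathbf{k}, \xi_3)^{\top} E$. One then verifies by direct substitution that this does annihilate $\mathcal{L}^\dagger$: in each momentum row the time-derivative term $-i\xi_3 k_j E$ cancels against the pressure-gradient term $i\xi_3 k_j E$, leaving only the Laplacian contribution $\nu k_j(\mathbf{k}\cdot\mathbf{k})E$, while the divergence row equals $i(\mathbf{k}\cdot\mathbf{k})E$, so both vanish once $\mathbf{k}\cdot\mathbf{k}=0$.

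Finally I would exhibit the isotropic vectors of $\mathbb{C}^3$ as a spinor orbit, using the classical rational parameterisation of the null cone: for a $2$-spinor $\xi = (\xi_1, \xi_2)$ set
\[ \mathbf{k}(\xi) = \big(\,\xi_1^2 - \xi_2^2,\ i(\xi_1^2 + \xi_2^2),\ -2\xi_1\xi_2\,\big), \]
for which $\mathbf{k}(\xi)\cdot\mathbf{k}(\xi) = (\xi_1^2-\xi_2^2)^2 - (\xi_1^2+\xi_2^2)^2 + 4\xi_1^2\xi_2^2 = 0$ by a one-line expansion, and conversely every isotropic $\mathbf{k}\in\mathbb{C}^3$ is of this form up to the sign $\xi \mapsto -\xi$; the remaining parameter $\xi_3 \in \mathbb{C}$ enters freely as the temporal frequency. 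There is essentially no serious obstacle here — the computation is routine — and the only point worth flagging is conceptual: the ``pressure'' amplitude $v_4$ is not an independent constant but is pinned to $\xi_3$ as soon as one demands a nontrivial divergence-free exponential solution, which is precisely why the incompressibility equation is the one that forces $\mathbf{k}$ onto the null cone.
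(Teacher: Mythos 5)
Your verification is correct. The paper offers no proof of this lemma at all --- it simply remarks that ``this result may be checked routinely'' --- and your direct substitution of the constant-amplitude exponential ansatz into $\mathcal{L}^\dagger\tilde{\phi}=0$, reducing the PDE to the algebraic system $(-i\xi_3+\nu\,\mathbf{k}\cdot\mathbf{k})v_j+ik_jv_4=0$ and $\mathbf{k}\cdot(v_1,v_2,v_3)=0$, is exactly that routine check; the cancellation of the $-i\xi_3 k_j$ and $+i\xi_3 k_j$ terms and the vanishing of the remaining terms on the null cone $\mathbf{k}\cdot\mathbf{k}=0$ confirms the stated solution. You also supply two things the paper leaves implicit: a derivation showing why the amplitude must be $(\mathbf{k},\xi_3)$ rather than a bare verification, and the explicit Cartan parameterisation $\mathbf{k}(\xi)=(\xi_1^2-\xi_2^2,\,i(\xi_1^2+\xi_2^2),\,-2\xi_1\xi_2)$ of the isotropic vectors by a $2$-spinor, which is what gives content to the phrase ``$\mathbf{k}=\mathbf{k}(\xi)$'' and to the paper's subsequent $SU(2)$ double-cover argument. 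One small caveat: your assertion that a nontrivial solution exists \emph{only if} $\mathbf{k}$ is isotropic holds only on the branch $v_4\neq 0$; when $\xi_3=-i\nu\,\mathbf{k}\cdot\mathbf{k}$ there are additional solutions with $v_4=0$ and $(v_1,v_2,v_3)$ merely orthogonal to $\mathbf{k}$, but since the lemma claims only the existence of a solution of the stated form, this does not affect the proof.
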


This result may be checked routinely. The appearance of the spinor has an important consequence, as we shall now see.
\begin{lemma}
There are at least 2 global relations associated with the IBVP for the unsteady Stokes equations.
\end{lemma}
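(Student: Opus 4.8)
The plan is to obtain each global relation as the vanishing boundary integral of a fundamental $3$-form, and then to show that the adjoint problem admits at least two genuinely inequivalent solutions, each of which feeds a different global relation into Stokes' theorem.

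First I would build the object to integrate. Applying Theorem~\ref{theorem1} component-wise exactly as in the treatment of systems of PDEs above, construct a fundamental $3$-form $\eta\in\Lambda^{3}(\Omega)$ on $\Omega\subset\mathbb{R}^{4}$ (coordinates $(\mathbf{x},t)$) for the matrix operator $\mathcal{L}$, so that $\mathrm d\eta=\big(\tilde\phi_i\mathcal{L}_{ij}\phi_j-\phi_j\mathcal{L}^{\dagger}_{ij}\tilde\phi_i\big)\,\mathrm dx\wedge\mathrm dy\wedge\mathrm dz\wedge\mathrm dt$. As soon as $\tilde\phi$ solves $\mathcal{L}^{\dagger}\tilde\phi=0$ the second term vanishes, so $\mathrm d\eta=0$ whenever $\mathcal{L}\phi=0$, and Stokes' theorem gives $\int_{\partial\Omega}\eta(\phi,\tilde\phi)=0$. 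This is one global relation for \emph{each} admissible $\tilde\phi$; two such relations are genuinely distinct when the underlying adjoint solutions are not carried into one another by a change of the free spectral parameters.

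Next I would exhibit the two adjoint solutions. The preceding lemma furnishes the spinorial family $\tilde\phi=(\mathbf{k}(\xi),\xi_3)^{\top}e^{-i\mathbf{k}\cdot\mathbf{x}+i\xi_3 t}$ with $\mathbf{k}\cdot\mathbf{k}=0$; the point is that this isotropy constraint does not define a graph over the transverse wavenumbers — equivalently, the spinor map $\xi\mapsto\mathbf{k}(\xi)$ is two-to-one onto the punctured cone — so the family splits into two sheets $\tilde\phi^{(\pm)}$ indexed by the two roots, in exact analogy with the way $e^{ik(x\pm t)}$ split the wave operator in Example~\ref{ex1}. Independently, a direct inspection of the symbol of $\mathcal{L}^{\dagger}$ shows it also degenerates along the locus $i\xi_3=\nu\,\mathbf{k}\cdot\mathbf{k}$, on which its kernel is the two-dimensional space of transverse modes $\tilde\phi=(\mathbf{v},0)^{\top}e^{-i\mathbf{k}\cdot\mathbf{x}+i\xi_3 t}$, $\mathbf{v}\cdot\mathbf{k}=0$, whose pressure component vanishes. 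Substituting $\tilde\phi^{(+)}$ and $\tilde\phi^{(-)}$ (or, alternatively, one spinorial and one transverse solution) into $\int_{\partial\Omega}\eta=0$ produces two relations $G^{(+)}=0$ and $G^{(-)}=0$.

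The remaining and genuinely delicate point is to verify that these two relations are not the same relation in disguise — that $G^{(-)}$ cannot be recovered from $G^{(+)}$ by a reparameterisation of the spectral data. I would argue this structurally rather than by computation: the coefficients of $\eta$ pair the boundary traces of $(u,p)$ against the components of $\tilde\phi$ and their derivatives, so for the transverse solution (pressure slot $0$) the relation involves only the velocity data, whereas for the spinorial solution the pressure trace enters through the $\phi_4$-slot; and for the two sheets $\tilde\phi^{(\pm)}$ the branch of $\mathbf{k}$ propagates into the exponential, so the resulting Fourier-type integrals are supported on different loci of the spectral plane, precisely as in Example~\ref{ex1}. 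Hence no single change of variables identifies the two, and there are at least two global relations. (In fact, counting kernel dimensions on the two components of the characteristic variety of $\mathcal{L}^{\dagger}$ gives three independent relations, but two suffice here.)
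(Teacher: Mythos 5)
Your proposal is correct, but it reaches the conclusion by a genuinely different (and more concrete) route than the paper. The paper's entire proof is the double-cover observation: $\mathrm{Spin}(3)\cong SU(2)$ covers $SO(3)$ twice, so a $2\pi$ rotation fixes the isotropic vector $\mathbf{k}$ while sending the spinor $\xi\mapsto U\xi$ with $U\neq\mathbb{I}$; the two spinors sitting over a single $\mathbf{k}$ are taken to label two global relations. You mention this same two-to-one spinor map, but you should be careful with your word ``equivalently'': two spinors over the \emph{same} $\mathbf{k}$ produce the \emph{same} adjoint solution $\tilde\phi$ and hence literally the same boundary integral, which is why you rightly sense that distinctness ``is the genuinely delicate point.'' What actually carries your argument — and what the paper does not do — is the exhibition of concretely different adjoint solutions: the two branches $k_3=\pm i\big(k_1^2+k_2^2\big)^{1/2}$ of the isotropic cone over the transverse wavenumbers (the true analogue of $e^{ik(x\pm t)}$ in Example~\ref{ex1}, since the branch enters the exponential), and, independently, the transverse kernel $\tilde\phi=(\mathbf{v},0)^{\top}e^{-i\mathbf{k}\cdot\mathbf{x}+i\xi_3 t}$ with $\mathbf{v}\cdot\mathbf{k}=0$ on the locus $i\xi_3=\nu\,\mathbf{k}\cdot\mathbf{k}$, whose vanishing pressure slot makes the resulting relation visibly different from the spinorial one. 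Either of these yields two genuinely inequivalent relations and so proves the lemma; the paper's group-theoretic remark buys brevity and an appealing structural explanation, while your version buys a verifiable distinctness argument at the cost of inspecting the characteristic variety of $\mathcal{L}^{\dagger}$ directly.
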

\begin{proof}
This is essentially a consequence of the fact $\mathrm{Spin}(3)\cong SU(2)$ is (def\/ined as) a double cover of $SO(3)$. Rotating $\mathbf{k}$ by $2\pi$ about some axis must leave it invariant so still provide a~solution to the adjoint problem, but the associated spinor transforms as $\xi \mapsto U\xi$ for $ U \in SU(2)$, with $U\neq \mathbb{I}$.
\end{proof}
We now decompose (\ref{stokes1})--(\ref{stokes4}) into a divergence form, from which we may construct the associated fundamental form. We omit the details of this, but it follows from Lemmas~\ref{lem1}, \ref{lem2}, \ref{exchange} that the unsteady stokes equations imply
\[ \frac{\partial \rho}{\partial t} + \text{Div}\,  J = 0, \]
where `density' $\rho$ and the components of the associated `f\/lux' $J = (J^1, J^2, J^3)$, are given by
\begin{gather*}
\rho = \tilde{\phi}_1 u_1 + \tilde{\phi}_2 u_2 + \tilde{\phi}_3 u_3, \\
J^1 =\tilde{\phi}_1 p + \phi_1 \tilde{p}+  \nu \big( u_1 \partial_x \tilde{\phi}_1 - \tilde{\phi}_1\partial_x u_1 + u_2 \partial_x \tilde{\phi}_2 - \tilde{\phi}_2\partial_x u_2+u_3 \partial_x \tilde{\phi}_3 - \tilde{\phi}_3\partial_x u_3 \big), \\
J^2 = \tilde{\phi}_2 p +\phi_2 \tilde{p}+  \nu \big( u_1 \partial_y \tilde{\phi}_1 - \tilde{\phi}_1\partial_y u_1 + u_2 \partial_y \tilde{\phi}_2 - \tilde{\phi}_2\partial_y u_2+u_3 \partial_y \tilde{\phi}_3 - \tilde{\phi}_3\partial_y u_3 \big), \\
J^3 = \tilde{\phi}_3 p +\phi_3 \tilde{p}+  \nu \big( u_1 \partial_z \tilde{\phi}_1 - \tilde{\phi}_1\partial_z u_1 + u_2 \partial_z \tilde{\phi}_2 - \tilde{\phi}_2\partial_z u_2+u_3 \partial_z \tilde{\phi}_3 - \tilde{\phi}_3\partial_x u_3 \big),
\end{gather*}
where $p=p(x,t)$, $u_i = u_i (x,t)$ etc. We now construct the fundamental 3-form associated with the problem, in which we encompass our previous solution to the adjoint problem. That is, we conclude that if $\mathcal{L}\phi=0$ then the following dif\/ferential form is closed
\[ \eta = \rho\, \dd x\wedge \dd y\wedge \dd z -J^1\dd y\wedge \dd z\wedge \dd t + J^2 \dd x\wedge \dd z\wedge \dd t - J^3 \dd x\wedge \dd y\wedge \dd t. \]
And so is \emph{fundamental}. We note that the terms in the above dif\/ferential form contain the $(\xi_1, \xi_2, \xi_3)$, which serve as spectral parameters. The global relation is then formed by integrating this form over $\Omega$, using data prescribed on the boundary. This results in an equation involving Fourier type integrals, which lend themselves to analysis as seen in \cite{fokas2002fdf}.

\appendix

\pdfbookmark[1]{Appendix A: Notation}{appendixA}
\section*{Appendix A: Notation}
We will use $\Omega$ to denote a simply connected subset of $\mathbb{R}^n$ with piecewise smooth boundary $\partial\Omega$. $L^2(\Omega)$ will denote the Hilbert space of square integrable functions w.r.t.\ the Lebesgue measure on $\Omega $, $\Lambda^k (\Omega)$ denotes the vector space of dif\/ferential $k$-forms on $\Omega$, $\dd :\Lambda^k (\Omega) \rightarrow \Lambda^{k+1} (\Omega)$ is the exterior derivative. When working with a fundamental $k$-form, $k=n-1$, i.e.\ one less than the dimension of the domain we are working on. We assume Stokes' theorem: on a $n$-manifold $\mathcal{M}$ with boundary $\partial\mathcal{M}$, we have
\[ \int_{\mathcal{M}} \dd \eta = \int_{\partial\mathcal{M}} \eta \]
for $\eta \in \Lambda^{n-1}(\mathcal{M})$ with compact support. The space of test functions on $\Omega$ will be denoted~$C^\infty_c (\Omega)$, meaning the space of smooth functions on $\Omega$ with compact support. $\mathcal{L}$ will denote a~li\-near dif\/ferential operator, with associated \emph{formal} adjoint $\mathcal{L}^\dagger$ with respect to the inner product on~$L^2(\Omega)$. The Greek letters $\alpha, \beta \ldots \in \mathbb{Z}^n_+$ will be used in mutli-index notation. The letters $i,j,k$ will denote numbers ranging from $1$ to $n$. Summation convention will be used.

\pdfbookmark[1]{Appendix B: Wave equation in $1+1$ dimensions}{appendixB}
\section*{Appendix B: Wave equation in $\boldsymbol{1+1}$ dimensions}

In Example~\ref{ex1} we construct the 1-form $\eta \in \Lambda^1 (\Omega)$, given by
\[ \eta = \left( \tilde{q}q_t - q\tilde{q}_t\right)\, \mathrm dx + \left(\tilde{q}q_x - q\tilde{q}_x\right)\, \mathrm dt.\]
Letting $\tilde{q}$ solve the adjoint problem, so $\tilde{q}=\exp[ ik(x- t)]$, enforces the closeness of $\eta$ is equivalent to $q$ solving $\square q=0$. We integrate this dif\/ferential form on $\partial\Omega$, and it follows from Stokes' theorem that
\[ \int_{\partial\Omega} e^{ik(x-t)} \left[ \left(q_t + ikq\right)\, \dd x + \left(q_x -ikq\right)\, \dd t \right] = 0 . \]
Now we carry out the integration, using the following notation for the initial data
\[ \hat{q}(k,t) = \int_0^l e^{ikx}q(x,t)\, \dd x, \qquad \hat{g}(k) = \int_0^l e^{ikx}q(x,0)\, \dd x, \qquad \hat{f}(k) = \int_0^l e^{ikx}q_t(x,0)\, \dd x \]
and the following notation for the relevant terms on the boundary are
\[ h_1 (\omega,t) = \int_0^t e^{-i\omega\tau}q_x(l,t)\,\dd \tau, \qquad h_2 (\omega,t) = \int_0^t e^{-i\omega\tau}q_x(0,t)\,\dd \tau. \]
We also introduce the notation $\hat{f}^s(k)$ to be def\/ined by
\[\hat{f}^s(k) = \int_0^l \sin (kx) f(x)\, \dd x. \]
With these def\/initions, two global relations in Example~\ref{ex1} follow.

\subsection*{Acknowledgements}
The author thanks Athanasios Fokas for introducing him to this subject area. Also thanks for Chris Taylor and Euan Spence for many helpful discussions. This work is supported by an EPSRC studentship.

\pdfbookmark[1]{References}{ref}
\LastPageEnding

\end{document}